\newtheorem{theorem}{Theorem}[section]
\newtheorem{lemma}[theorem]{Lemma}
\newtheorem{proposition}[theorem]{Proposition}
\newtheorem{corollary}[theorem]{Corollary}
\newtheorem{definition}[theorem]{Definition}
\newtheorem{remark}[theorem]{Remark}
\newcommand\Ball{\mathop{\rm Ball}}
\newcommand{\cl}[1]{\mathcal{#1}}
\newcommand{\bb}[1]{\mathbb{#1}}
\newcommand{\sca}[1]{\left\langle#1\right\rangle} 
\begin{document}

\title[Hyperreflexivity and Similarity]{Hyperreflexivity of von Neumann algebras and similarity of finitely generated $C^*$-algebras.}

\author{G. K. Eleftherakis, V. I. Paulsen}
\date{}
\address{}

\email{}

\begin{abstract}Let $\cl A$ be a $C^*$-algebra. We say that $\cl A$ satisfies the SP if every bounded homomorphism $\cl A\to B(K)$, with  $K$ a Hilbert space, is similar to a $*$-homomorphism. We introduce three hypotheses that relate to extending hyperreflexive algebras by projections. We prove that our third hypothesis is equivalent to every finitely generated C*-algebra satisfying the SP.

We show that to prove that every von Neumann algebra is hyperreflexive it is enough to show that when one extends a hyperreflexive algebra by a single projection it remains hyperreflexive.

 \end{abstract}

\maketitle

\section{Preliminaries}

This paper focuses on the longstanding {\it Kadison Similarity Problem}: whether every C*-algebra $\cl A$ possesses the similarity property (SP). Specifically, the question of whether every bounded homomorphism from a C*-algebra into $B(H),$ where $H$ is a Hilbert space, is similar to a *-homomorphism.

We establish conditions under which all C*-algebras generated by finitely many elements satisfy the SP. Parallel results hold for the weak similarity property (WSP) in the context of finitely generated von Neumann algebras. It should be noted that another major open problem in the theory concerns whether von Neumann algebras acting on a separable Hilbert space admit a single generator. If the answer to the single generator problem is affirmative, then our results would give necessary and sufficient conditions for the WSP problem to also have an affirmative answer.

The main tool in our proof is the concept of hyperreflexivity. It is well known that the similarity property holds for all C*-algebras if and only if all von Neumann algebras are hyperreflexive. In \cite{ep}, it was shown that a given von Neumann algebra $\cl A$  satisfies the WSP if and only if its commutant $\cl A'$ is completely hyperreflexive, meaning that the algebra $\cl A'\otimes B(\ell^2(I))$ is hyperreflexive for all cardinals $I.$

Using this framework, we study three ways of extending an algebra by a projection. We prove that the first way(EPH.1) preserves the property of being hyperreflexive if and only if every von Neumann algebra is hyperreflexive, and consequently that the similarity problem has an affirmative answer. Thus, the similarity problem is equivalent to determining if hyperreflexivity is preserved by a certain one-step extension property.  We prove that the third way to extend(EPH.3) preserves hyperreflexivity if and only if every finitely generated von Neumann algebra is hyperreflexive. As a consequence, we show that if every separably acting von Neumann algebra is singly generated(another famous problem of Kadison), then these two hypotheses are equivalent. 

Below, we list some related works that have been published on this topic, 
\cite{chris, ep, haag, kad, ki, lm, pap1, pap2, pisier}.

In what follows the symbol $B(H)$ is used for the space of bounded linear operators acting on the Hilbert space $H.$ 

\begin{definition} Let $\cl X\subseteq B(H)$ be an operator space and $T \in B(H)$. We set
$$r_{\cl X}(T)=\sup\{|\sca{T\xi, \eta}|: \|\xi\|=\|\eta\|=1,\sca{X\xi, \eta}=0, \forall X\in \cl X\}.$$
\end{definition}

\begin{definition} Let $\cl X\subseteq B(H)$ be a reflexive space. We call $\cl X$ hyperreflexive if there exists $k>0$ such that 
$$d(T, \cl X)\le k r_{\cl X}(T), \forall T\in B(H).$$
\end{definition}

\begin{definition}Let $\cl X\subseteq B(H)$ be a reflexive space. We define 
$$k(\cl X)=\sup_{T\notin \cl X}\frac{d(T, \cl X)}{r_{\cl X}(T)}$$ to be the hyperreflexivity constant of $\cl X.$
\end{definition}

Clearly, $\cl X$ is hyperreflexive if and only if $k(\cl X)<\infty .$

\begin{remark} If $\cl X$ is a dual  operator space and $I$ is a set, then $M_I(\cl X)$ is the space of $I\times I$ matrices with entries in $\cl X$ whose finite submatrices have uniformly bounded norm. The space $M_I(\cl X)$  is also a dual operator space. One can also identify $M_I(\cl X)$ with the space $\cl X\bar \otimes B(l^2(I)). $
\end{remark}

\begin{definition} Following \cite{ep}, we call a reflexive space $\cl X \subseteq B(H)$, completely hyperreflexive, provided $M_I(\cl X)$ is hyperreflexive for every set $I$.
\end{definition}

In the next section we show that this definition of completely hyperreflexive agrees with the definition given earlier by  Davidson and Levene \cite{dl}.

\begin{theorem}\label{hy}\cite[Theorem 4.6]{ele} Let $\cl A, \cl B$ be von Neumann algebras and $\pi: \cl A\to \cl B$ a $*$-isomorphism. Then there exists a set $J_0$ such  that $$k(M_J(\cl A'))=k(M_J(B')$$ for all $J\supseteq J_0.$

\end{theorem}

\begin{corollary}\label{hyp} Let $\cl A, \cl B$ be von Neumann algebras and $\pi: \cl A\to \cl B$ be an onto  $w^*$-continuous $*$-homomorphism. Then there exists a set $J_0$ such  that $$k(M_J(\cl B'))\le k(M_J(\cl A'))$$ for all $J\supseteq J_0.$
\end{corollary}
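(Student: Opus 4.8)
The plan is to reduce the surjective case to the isomorphism case of Theorem \ref{hy}. First I would analyze the kernel: since $\pi$ is a $w^*$-continuous $*$-homomorphism of von Neumann algebras, $\ker\pi$ is a $w^*$-closed two-sided ideal, hence of the form $\cl A z$ for a central projection $z\in\cl A$. Writing $e=1-z$, the map $\pi$ factors through a $*$-isomorphism $\cl A e\cong\cl B$. Thus it suffices to compare $k(M_J(\cl B'))$ with $k(M_J(\cl A'))$ through the intermediate algebra $\cl A e$.

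The second step records the behavior of the commutant under reduction by a central projection. Since $e\in Z(\cl A)=Z(\cl A')=\cl A\cap\cl A'$, the projection $e$ is central in $\cl A'$ as well, so $\cl A'$ splits as a block-diagonal direct sum $\cl A'=(\cl A')_e\oplus(\cl A')_{1-e}$ with respect to $H=eH\oplus(1-e)H$, and the commutant of $\cl A e$ computed on $eH$ equals the reduced algebra $(\cl A')_e$. Amplifying, $M_J(\cl A')=M_J((\cl A')_e)\oplus M_J((\cl A')_{1-e})$ is again block-diagonal, so $M_J((\cl A e)')$ is a direct summand of $M_J(\cl A')$.

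The heart of the argument is a lemma: if a reflexive operator space decomposes as a block-diagonal direct sum $\cl X=\cl X_1\oplus\cl X_2$ on $H_1\oplus H_2$, then $k(\cl X_1)\le k(\cl X)$. To see this, embed $T_1\in B(H_1)$ as $T=T_1\oplus 0$. Because $0\in\cl X_2$, one checks that $d(T,\cl X)=d(T_1,\cl X_1)$; and because the pre-annihilator condition $\langle X\xi,\eta\rangle=0$ for all $X\in\cl X$ forces the $H_1$- and $H_2$-components of $\xi,\eta$ to annihilate $\cl X_1$ and $\cl X_2$ separately, a short normalization argument gives $r_{\cl X}(T)=r_{\cl X_1}(T_1)$. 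Since $T_1\notin\cl X_1$ if and only if $T\notin\cl X$, taking the supremum over such $T_1$ yields $k(\cl X_1)\le k(\cl X)$. Applying this with $\cl X_1=M_J((\cl A')_e)$ inside $\cl X=M_J(\cl A')$ gives $k(M_J((\cl A e)'))\le k(M_J(\cl A'))$.

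Finally I would apply Theorem \ref{hy} to the $*$-isomorphism $\cl A e\cong\cl B$: there is a cardinal $J_0$ with $k(M_J(\cl B'))=k(M_J((\cl A e)'))$ for all $J\supseteq J_0$. Chaining this equality with the summand inequality of the previous step gives $k(M_J(\cl B'))\le k(M_J(\cl A'))$ for all $J\supseteq J_0$, as desired. I expect the main obstacle to be the careful verification that both $d(\cdot,\cl X)$ and $r_{\cl X}(\cdot)$ are exactly preserved (not merely comparable) under the $T_1\oplus 0$ embedding; the distance is routine, but the equality $r_{\cl X}(T_1\oplus 0)=r_{\cl X_1}(T_1)$ requires analyzing how the annihilating vectors split across the two summands and handling the normalization when one component vanishes.
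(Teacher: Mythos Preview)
Your proposal is correct and follows essentially the same route as the paper: reduce to a central cut-down $\cl A e\cong \cl B$, invoke Theorem~\ref{hy} for the equality $k(M_J(\cl B'))=k(M_J((\cl A e)'))$, and then use $k(M_J(\cl A'e))\le k(M_J(\cl A'))$. The only difference is that the paper simply asserts the last inequality in one line, while you unpack it via the block-diagonal summand lemma; your added detail is accurate and fills in exactly what the paper leaves implicit.
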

\begin{proof} There exists a central projection $P\in \cl A$  such that the map $$\cl AP\to \cl B,\;\;AP\to \pi(A)$$ is a $*$-isomorphism. By the above Theorem there exists a set $J_0$ such  that $$k(M_J(\cl B'))= k(M_J(\cl A'P))\le  k(M_J(\cl A'))$$ for all $J\supseteq J_0.$

\end{proof}

\begin{definition} Let $\cl A$ be a $C^*$-algebra. We say that $\cl A$ satisfies the SP(similarity property)  if for all bounded homomorphisms $u: \cl A\to B(H)$ there exists an invertible operator $S\in B(H)$ such that the map $\pi(\cdot)=S^{-1}u(\cdot)S$ is a $*$-homomorphism of $\cl A.$
\end{definition}

\begin{definition} Let $\cl A$ be a von Neumann algebra. We say that $\cl A$ satisfies the WSP if for all $w^*$-continuous bounded homomorphisms $u: \cl A\to B(H)$ there exists an invertible operator $S\in B(H)$ such that the map $\pi(\cdot)=S^{-1}u(\cdot)S$ is a $*$-homomorphism of $\cl A.$
\end{definition}

\begin{remark}\cite{ep} Let $\cl A$ be a $C^*$-algebra.Then $\cl A$ satisfies the SP iff $\cl A^{**}$ satisfies the WSP.
\end{remark}

\begin{theorem}\cite{ep} \label{pep} Let $\Omega \subseteq B(H)$ be a von Neumann algebra. Then $\Omega$ satisfies the WSP if and only if for all sets $J$ the algebra 
$M_J(\Omega^\prime)$ is hyperreflexive.
\end{theorem}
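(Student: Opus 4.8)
My plan is to prove the two implications separately, in both cases passing through the classical correspondence between bounded homomorphisms of $\Omega$ and bounded derivations, and reading the reflexivity seminorm off the off-diagonal corners via Lemma \ref{star2}. First I would set up a dictionary. For a cardinal $J$ let $\sigma$ denote the amplification $A\mapsto A\otimes 1$ acting on $H\otimes \ell^2(J)$, so that $\sigma(\Omega)'=M_J(\Omega')$, and for $T\in M_J(B(H))$ put $\delta_T(A)=T\sigma(A)-\sigma(A)T$, a normal derivation of $\sigma(\Omega)$. Then $T\in M_J(\Omega')$ exactly when $\delta_T=0$, the implementers of $\delta_T$ are precisely $T+M_J(\Omega')$, and hence $d(T,M_J(\Omega'))$ is the least norm of an operator implementing $\delta_T$. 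On the other side, the cyclic projections $R_\xi$ onto $\overline{M_J(\Omega')\xi}$ lie in $\sigma(\Omega)$; applying Lemma \ref{star2} to each $\{R_\xi\}'\supseteq M_J(\Omega')$ gives $\|R_\xi^\bot T R_\xi\|=r_{\{R_\xi\}'}(T)\le r_{M_J(\Omega')}(T)$, and a short squeeze, using that for large $J$ every projection of $\Omega$ occurs as such an $R_\xi$ and both off-diagonal directions are seen, yields
\[ r_{M_J(\Omega')}(T)\;=\;\sup_{Q}\bigl\|[\,T,\sigma(Q)\,]\bigr\|\;\asymp\;\bigl\|\delta_T|_{\sigma(\Omega)}\bigr\|, \]
the supremum being over projections $Q$ of $\Omega$; the last comparison follows because the extreme points of the selfadjoint unit ball of $\sigma(\Omega)$ are the symmetries $2Q-1$, so a convexity/lower-semicontinuity argument passes from projections to all contractions with a universal constant. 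Thus, through Lemma \ref{star2}, hyperreflexivity of $M_J(\Omega')$ is exactly the assertion that every normal inner derivation of $\sigma(\Omega)$ is implemented by an operator whose norm is controlled by the derivation norm.

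For the implication WSP $\Rightarrow$ hyperreflexivity, a Baire-category/closed-graph argument first upgrades the pointwise WSP to a uniform one: there is a constant $C$ so that every $w^*$-continuous unital homomorphism of $\Omega$ of norm at most $2$ is similar to a $*$-homomorphism through $S$ with $\nor{S}\,\nor{S^{-1}}\le C$. Given $T$, I would rescale so that $\|\delta_T\|\le 1$ and test this against the triangular homomorphism $A\mapsto\bigl[\begin{smallmatrix}\sigma(A)&\delta_T(A)\\0&\sigma(A)\end{smallmatrix}\bigr]$, which is $w^*$-continuous, of norm at most $2$, and similar to a $*$-homomorphism precisely when $\delta_T$ is inner. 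Extracting the (unipotent) implementer $X\in T+M_J(\Omega')$ from the similarity, bounding $\nor X$ by a function of $C$, and undoing the scaling gives $d(T,M_J(\Omega'))\le C'\,\|\delta_T|_{\sigma(\Omega)}\|$; combined with the displayed comparison this is $d(T,M_J(\Omega'))\le k\,r_{M_J(\Omega')}(T)$.

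For the converse, hyperreflexivity of every $M_J(\Omega')$ gives, via the dictionary, that every normal inner derivation of every amplification $\sigma$ is implemented with norm controlled by its derivation norm, uniformly. Given a $w^*$-continuous bounded unital homomorphism $u\colon\Omega\to B(K)$, I would reduce the existence of a similarity to this controlled innerness: after amplifying so that $u$ and a normal $*$-representation share a common space, the failure of $u$ to be a $*$-homomorphism is recorded by a normal derivation (equivalently, one seeks a positive invertible intertwiner of $u$ and $u^\sharp(A)=u(A^*)^*$), and the controlled implementer furnished by hyperreflexivity produces, after straightening the associated idempotent to an exact selfadjoint projection—using the constant-one conclusion of Lemma \ref{star2} to run an approximate-to-exact correction—the invertible $S$ with $S^{-1}u(\cdot)S$ a $*$-homomorphism.

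The crux, and the step I expect to be hardest, is the reverse reduction of a \emph{general} $w^*$-continuous bounded homomorphism to a controlled derivation problem (a Christensen/Kirchberg-type averaging argument) while keeping everything $w^*$-continuous and the constants uniform over all cardinals $J$; in particular it is exactly the passage over all $J$ that promotes the cyclic, one-sided seminorm $r_{M_J(\Omega')}$ to the full derivation norm in the displayed identity, and the exact constant-one computation of Lemma \ref{star2} that allows an approximately selfadjoint implementer to be replaced by an exact one rather than merely a nearby idempotent.
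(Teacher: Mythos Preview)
The paper does not prove this theorem. It is quoted verbatim from \cite{ep} as a tool (note the citation in the theorem header and the absence of any proof between the statement and Remark~\ref{ep}), so there is no ``paper's own proof'' to compare against. Your proposal is an attempt to reconstruct the argument of \cite{ep}, not of the present paper.

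That said, the route you sketch is the standard one and is essentially how such results are obtained in the literature (Arveson, Christensen, Kirchberg, Pisier). The dictionary $r_{M_J(\Omega')}(T)\asymp\|\delta_T|_{\sigma(\Omega)}\|$ is the Arveson--Christensen distance estimate for von Neumann algebras, and the forward implication via the upper-triangular homomorphism $A\mapsto\bigl(\begin{smallmatrix}\sigma(A)&\delta_T(A)\\0&\sigma(A)\end{smallmatrix}\bigr)$ is exactly the classical device. Two points deserve tightening. First, the claim that ``for large $J$ every projection of $\Omega$ occurs as some $R_\xi$'' is not literally what you need; what one actually uses is that for a von Neumann algebra $\cl B$ one has $r_{\cl B}(T)=\sup\{\|P^\bot TP\|:P\in\mathrm{Proj}(\cl B')\}$, which follows because every projection in $\cl B'$ is a join of cyclic ones and the map $P\mapsto\|P^\bot TP\|$ behaves well under increasing nets --- the amplification over $J$ is not needed for this identity itself, only later to absorb an arbitrary representing Hilbert space. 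Second, and more seriously, the converse direction is only gestured at: reducing a general $w^*$-continuous bounded homomorphism to a controlled inner-derivation problem is precisely the content of Kirchberg's equivalence \cite{ki} (or Christensen's earlier work \cite{chris}), and your phrases ``after amplifying so that $u$ and a normal $*$-representation share a common space'' and ``straightening the associated idempotent'' hide a genuine argument involving the Stinespring/Haagerup construction of a completely positive map from the positive-definite kernel $A\mapsto u(A)u(A)^*$, or alternatively a polar-decomposition trick on $\sum u(u_i)u(u_i)^*$. As written, that paragraph is a plan rather than a proof.
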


\begin{remark}\label{ep} If $\Omega$ is a von Neumann algebra satisfying the WSP then there exists $0<M<\infty$ such that 
$k(M_J(\Omega^\prime))\le M$ for all sets $J.$
\end{remark}

We say that the $C^*$-algebra (resp. von Neumann algebra) $\cl A$ is finitely generated if there exists a finite set $F\subseteq \cl A$ which generates $\cl A$ as a $C^*$-algebra (resp. von Neumann algebra).

We introduce the following hypotheses:

EPH.1 For every hyperreflexive von Neumann algebra $\cl   A$ acting on the Hilbert space $H$ and every projection $Q\in B(H)$ the algebra $\cl A\vee \{Q\}^{''}$ is   hyperreflexive.

EPH.2  For every completely hyperreflexive von Neumann algebra $\cl A$ acting on the Hilbert space $H$ and every projection $Q\in B(H)$ the algebra  $\cl A\wedge \{Q\}'$ is completely 
 hyperreflexive.

EPH.3 For every $Q_1, Q_2,...,Q_n, n\in \bb N$ projections the algebra $$\{Q_1 \}'\wedge ...\wedge\{Q_n\}'$$ is hyperreflexive.

We prove that EPH.1 is equivalent to the fact that every von Neumann algebra is hyperreflexive and consequently, that all $C^*$-algebras satisfy SP. We thank the anonymous referee of an earlier version of this paper for essentially this observation. We prove that EPH.2 implies EPH.3 and EPH.3 is equivalent to the statement that every finitely generated $C^*$-algebra satisfies SP.


\section{Similarity and projections}

In this section, we prove the interesting property that the Kadison problem has a positive solution if and only if the extension of every 
hyperreflexive von Neumann  algebra by a projection is also a hyperreflective algebra.

\begin{lemma}\label{union} Let $\cl X_i, i\in I$ be hyperreflexive operator spaces acting on the Hilbert space $H,$ such that 
$\forall i, j, \cl X_i\subseteq\cl  X_j, \;\;or\;\;\cl X_j\subseteq \cl X_i.$ We denote $\cl X=\overline{\cup_i\cl X_i}^{w^*}$ and we assume that  $\kappa=\sup_ik(\cl X_i)<\infty .$ Then $\cl X$ is hyperreflexive and $k(\cl X)\le \kappa .$
\end{lemma}
\begin{proof} We denote  $i\le j$ if and only if  $\cl X_i\subseteq \cl X_j.$ Let $T\in B(H),$ then $$d(T, \cl X)\le d(T, \cl X_i)\le \kappa r_{\cl X_i}(T).$$ Since $r_{\cl X_j}(T)\le r_{\cl X_i}(T)$ for all $j\ge i$ we have that 
$$d(T, \cl X)\le \kappa \lim_ir_{\cl X_i}(T)=\kappa \inf _ir_{\cl X_i}(T).$$
Choose $\epsilon_i>0, i\in I$ such that $\lim_i\epsilon_i=0.$

We can find unit vectors $\xi_i, \eta_i$ such that 
$$\sca{X\xi_i ,\eta_i}=0, \forall X\in \cl X_i$$ and $$r_{\cl X_i}(T)-\epsilon_i\le |\sca{T\xi_i, \eta_i}|\le r_{\cl X_j}(T)$$ forall $j\le i.$
Thus $$\lim_i(r_{\cl X_i}(T)-\epsilon_i)\le \lim_i |\sca{T\xi_i, \eta_i}| \le \lim_jr{\cl X_j(T)} \Rightarrow $$
 $$\lim_i|\sca{T\xi_i, \eta_i}|=\inf_ir_{\cl X_i}(T).$$
We conclude that 
$$d(T, \cl X)\le \kappa \lim_i |\sca{T\xi_i, \eta_i}|.$$
Passing to a subnet if it is neseccary we may assume that $$weak^*-\lim_i \xi_i\otimes \eta_i^*=\xi\otimes \eta^*.$$
Since $\sca{X\xi, \eta }=0$ for all $X\in \cl X_i$ and for all $i$ we have that  $\sca{X\xi, \eta }=0$ for all $X\in \cl X.$
Thus $$d(T, \cl X)\le \kappa \sca{T\xi, \eta}\le \kappa r_{\cl X}(T).$$

\end{proof}
Given a hyperreflexive space $\cl X$, \cite{dl} set
\[ k_{cb}(\cl X) = \sup_{n \in \bb N} k(M_n(\cl X)),\]
where we identify $M_n(\cl X) = M_{\{1,...,n\}}(\cl X)$ and $\bb N$ denotes the natural numbers.

\begin{lemma}\label{union2} Let $\cl X$ be a dual operator space such that $k(M_{\bb N} (\cl X))<\infty .$ 
Then  $k_{cb}(\cl X) = k(M_{\bb N}(\cl X))=k(M_{I}(\cl X)),$ for any infinite set $I$.
\end{lemma}
\begin{proof} 
Since $M_{\bb N}(\cl X) = \overline{\cup M_n(\cl X)}^{w^*}$, by Lemma~\ref{union},
\[ k(M_{\bb N}(\cl X) \le \sup_n k(M_n(\cl X)) \le k(M_{\bb N}(\cl X),\]
and the first equality follows.

Conversely, 
let $\cl X \subseteq B(\cl H)$ and let $T= (T_{i,j}) \in M_I(B(\cl H))$.  Pick $X_n \in M_I(\cl X)$ such that
\[ d(T, M_I(\cl X) = \lim_n \|T - X_n \|\]
and pick finite subsets $E_n \subseteq I$ such that
\[ \|T - X_n \| \le \|P_n TP_n - P_nX_n P_n \| + 1/n,\]
where $P_n$ is the projection onto the set $E_n$.
If we let $E= \cup_n E_n$ then $E$ is a countable set and setting $R$ be the compression of $T$ to $E$ we have that
\[ d(T, M_I(\cl X)) = d(R, M_E(\cl X)) \le k_{cb}(\cl X) r_E(R, M_E(\cl X)) \le k_{cb}(\cl X) r_I(T, M_I(\cl X)).\]
Thus,  $M_I(\cl X)$ is hyperreflexive with constant at most $k_{cb}(\cl X)$, and the result follows.
\end{proof}

The above Lemma improves the main result of \cite{ep}, see \ref{ep}:

\begin{theorem}\label{new}  The von Neumann algebra $\cl A$ satisfies the WSP if and only if $k(M_{\bb N} (\cl A'))<\infty .$
\end{theorem}

\begin{theorem} The EPH.1 hypothesis is equivalent to every  von Neumann algebra being hyperreflexive, and consequently, to every $C^*$-algebra satisfying the SP. 
\end{theorem}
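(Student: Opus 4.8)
The plan is to establish a chain of equivalences by leveraging the machinery developed in the preceding sections, together with the known equivalence (Theorem \ref{pep} and Remark \ref{ep}) between the WSP and complete hyperreflexivity of the commutant. The statement asserts three equivalent conditions: (i) EPH.1; (ii) every von Neumann algebra is hyperreflexive; (iii) every $C^*$-algebra satisfies the SP. The equivalence (ii)$\Leftrightarrow$(iii) is essentially the ``well known'' fact cited in the introduction (via the WSP/SP correspondence of the Remark following Theorem \ref{pep}, applied to $\cl A^{**}$, and Theorem \ref{pep} itself), so the real content is the equivalence of EPH.1 with universal hyperreflexivity.

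First I would prove the easy direction, that universal hyperreflexivity implies EPH.1. If every von Neumann algebra is hyperreflexive, then in particular, given any hyperreflexive $\cl A\subseteq B(H)$ and any projection $Q\in B(H)$, the algebra $\cl A\vee\{Q\}''$ is itself a von Neumann algebra acting on $H$, hence hyperreflexive by hypothesis; this is immediate and requires no work beyond observing that $\cl A\vee\{Q\}''$ is a von Neumann algebra.

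The substantive direction is that EPH.1 implies every von Neumann algebra is hyperreflexive. Here the strategy is to reduce an arbitrary von Neumann algebra to something obtained from a known hyperreflexive algebra by adjoining finitely many, and ultimately a single, projection. The natural starting point is that $B(H)$ itself is trivially hyperreflexive with constant $1$ (since $r_{B(H)}(T)=0$ forces $T\in B(H)$), and more usefully that for a von Neumann algebra generated by projections one can build up the algebra one projection at a time. Concretely, I would use the commutant formulation: an arbitrary von Neumann algebra $\cl M$ can be written with its commutant $\cl M'$ expressed as an intersection of commutants of individual projections when $\cl M$ is generated by those projections, and then dualize. The key technical input is Lemma \ref{OYS} together with the observation that intersecting with $\{Q\}'$ (equivalently, joining by a projection on the commutant side) preserves hyperreflexivity under EPH.1. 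By iterating the single-projection extension hypothesis and passing to commutants via the relation $\cl A\vee\{P\}''=(\cl A'\wedge\{P\}')'$ used in the previous theorem, one reduces the hyperreflexivity of $\cl M$ to finitely many applications of EPH.1 starting from a hyperreflexive base algebra, and then handles the general (infinitely-generated) case by an approximation or direct-limit argument.

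\textbf{Main obstacle.} I expect the hardest part to be passing from the finitely-generated situation, where iterating EPH.1 a finite number of times clearly suffices, to an \emph{arbitrary} von Neumann algebra, which need not be generated by finitely many projections. The single-projection extension hypothesis only gives hyperreflexivity after finitely many steps, and the hyperreflexivity constant may grow with each application; controlling this constant (or finding a uniform bound, perhaps via Theorem \ref{hy} and its insensitivity to the cardinal $J$, or via a suitable inductive/limiting argument that keeps $k$ bounded) is where the genuine difficulty lies. An alternative route that may sidestep the constant-growth issue is to reduce directly to the case of an algebra whose commutant is $\{Q\}'$ for a single projection $Q$ on a sufficiently large amplification, exploiting that every von Neumann algebra is, up to the cardinal-stable equivalence of Theorem \ref{hy}, encoded by a single projection after tensoring with $B(\ell^2(I))$; this would turn the infinitely-many-generators problem into a single application of the one-projection hypothesis.
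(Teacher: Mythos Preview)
Your treatment of the trivial direction is fine, and you correctly locate the difficulty in the other direction: iterating EPH.1 over a generating set of projections for an arbitrary von Neumann algebra only handles the finitely generated case, and you have no mechanism to pass to a limit while controlling the hyperreflexivity constant. The approximation/direct-limit sketch is not a proof---hyperreflexivity is not known to pass to weak-$*$ limits, and the constant can in principle blow up along the iteration---and your ``alternative route'' (encoding the algebra by a single projection after amplification) is too vague to be assessed.

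The paper sidesteps this obstacle completely, and the missing idea is concrete. One does \emph{not} try to build the given algebra $\cl M$ itself by adjoining projections. Instead, fix an arbitrary von Neumann algebra $\cl A$ and observe two facts: first, the diagonal ampliation $\cl A'\bar\otimes \bb CI_{\ell^2}$ is always hyperreflexive (independently of any hypothesis); second, by Davis's theorem there exist three projections $P_1,P_2,P_3$ with $B(\ell^2)=\{P_1,P_2,P_3\}''$. Hence
\[
\cl A'\bar\otimes B(\ell^2)=\bigl(\cl A'\bar\otimes \bb CI_{\ell^2}\bigr)\vee\{I\otimes P_1\}''\vee\{I\otimes P_2\}''\vee\{I\otimes P_3\}'',
\]
so exactly three applications of EPH.1, starting from a known hyperreflexive base, yield that $\cl A'\bar\otimes B(\ell^2)$ is hyperreflexive. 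By Theorem~\ref{pep} this means $\cl A$ satisfies WSP. Since $\cl A$ was arbitrary, every $C^*$-algebra satisfies SP and every von Neumann algebra is hyperreflexive. The point is that the infinitely-many-generators problem never arises: the three Davis projections suffice for \emph{every} $\cl A$, because what is being built is not $\cl A$ but the factor $B(\ell^2)$ in the tensor product.
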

\begin{proof} By \cite{ep}, to deduce that every $C^*$-algebra satisfies the SP, it suffices to prove that every  separably acting von Neumann algebra $\cl A$ satisfies WSP.

Given any von Neumann algebra $\cl A$, the algebra $\cl A'\bar \otimes \bb CI_{\ell^2}$ is hyperreflexive. By \cite{DAVIS}, there exist projections $\{P_1, P_2, P_3\}$ such that 
$$B(\ell^2)=\{P_1, P_2, P_3\}^{''}.$$

We have that
$$\cl A'\bar \otimes B(\ell^2)=(\cl A'\bar \otimes \bb CI_{\ell^2})\vee (\{I\otimes P_1\}^{''}) \vee (\{I\otimes P_2\}^{''})) \vee (\{I\otimes P_3\}^{''})). $$
Thus by EPH.1 the algebra $\cl A'\bar \otimes B(\ell^2)$ is hyperreflexive. Therefore by  \ref{new} $\cl A$ satisfies WSP.
Since $\cl A$ was arbitrary, every $C^*$-algebra satisfies the SP, and so, equivalently, every von Neumann algebra is hyperreflexive.

Conversely, if every $C^*$-algebra satisfies the SP, then every von Neumann algebra satisfies the WSP and so by Theorem  \ref{new}, every von Neumann algebra is hyperreflexive and so EPH.1 trivially holds.
 \end{proof}

\section{Similarity and Finitely Generated Algebras}

\begin{theorem}\label{xxx}  EPH.2 assumption implies EPH.3
\end{theorem}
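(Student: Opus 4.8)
The plan is to prove by induction on $n$ the formally stronger statement that the von Neumann algebra $\{Q_1\}'\wedge\cdots\wedge\{Q_n\}'$ is \emph{completely} hyperreflexive; taking $I$ to be a one-point set in Definition \ref{000A} then recovers ordinary hyperreflexivity, which is exactly EPH.3. The reason to carry the stronger conclusion is that EPH.2 can only be applied to algebras that are already completely hyperreflexive, so hyperreflexivity alone would not propagate through the induction.

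For the base case I would take the empty meet, namely $B(H)$ itself, and check that it is completely hyperreflexive. This is immediate: $M_I(B(H))\simeq B(\ell^2(I)\otimes H)$, and for any Hilbert space $K$ the algebra $B(K)$ is trivially hyperreflexive since $d(T,B(K))=0$ for every $T\in B(K)$.

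For the inductive step, assume that $\cl B=\{Q_1\}'\wedge\cdots\wedge\{Q_{n-1}\}'$ is completely hyperreflexive. Being a meet of commutants, $\cl B$ is a von Neumann algebra acting on $H$, so the hypotheses of EPH.2 are met with $\cl A=\cl B$ and $Q=Q_n$. Applying EPH.2 gives that
$$\cl B\wedge\{Q_n\}'=\{Q_1\}'\wedge\cdots\wedge\{Q_n\}'$$
is completely hyperreflexive, which closes the induction and proves EPH.3.

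I do not expect a genuine obstacle here; the entire content of the argument is the decision to run the induction with complete hyperreflexivity as the invariant. The only points requiring care are verifying that each partial meet $\{Q_1\}'\wedge\cdots\wedge\{Q_k\}'$ is indeed a von Neumann algebra acting on $H$ (so that EPH.2 applies at every stage) and noting that the associativity of the meet lets us peel off one projection at a time.
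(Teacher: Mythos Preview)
Your proposal is correct and is essentially the same argument as the paper's: both run an induction on $n$ with \emph{complete} hyperreflexivity as the invariant so that EPH.2 can be applied at each stage. The paper phrases the induction by inflating to $M_I$ and using the identity $M_I(\{Q_1\}'\wedge\cdots\wedge\{Q_n\}')=\{Q_1\otimes I_{\ell^2(I)}\}'\wedge\cdots\wedge\{Q_n\otimes I_{\ell^2(I)}\}'$, whereas you keep the word ``completely hyperreflexive'' throughout and start from the empty meet $B(H)$; these are cosmetic differences only.
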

\begin{proof}  Let $Q_i, 1\le i\le n$ be projections. For a cardinal $I$ using induction on $n$ we have that  $$\{Q_1\otimes  I _{\ell^2(I)} \}'\wedge ...\wedge\{Q_n\otimes I _{\ell^2(I)}\}'= M_I(\{Q_1\}'\wedge ...\wedge \{Q_n\}')$$
and this algebra is hyperreflexive, because of  assumption EPH.2. 
\end{proof}

\begin{lemma}\label{main} Let $\cl A$ be a $C^*$-algebra generated by a finite set of projections. Under the EPH.3 hypothesis, $\cl A$ satisfies SP.
\end{lemma}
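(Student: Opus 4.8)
The plan is to pass to the enveloping von Neumann algebra and apply Theorem~\ref{pep}. By the Remark of \cite{ep} recalled above, $\cl A$ satisfies the SP if and only if $\cl A^{**}$ satisfies the WSP, so it suffices to prove the latter.

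Write $\cl A$ as the $C^*$-algebra generated by projections $p_1,\dots,p_n$, and let $\pi_u:\cl A\to B(H_u)$ be the universal representation, with $P_i=\pi_u(p_i)$. Since $\pi_u(\cl A)$ is the $C^*$-algebra generated by the projections $P_1,\dots,P_n$, taking bicommutants gives $\cl A^{**}=\pi_u(\cl A)''=\{P_1,\dots,P_n\}''$. In particular $(\cl A^{**})'=\{P_1\}'\wedge\cdots\wedge\{P_n\}'$, which is already hyperreflexive by EPH.3; the work below is to upgrade this to all amplifications, as demanded by Theorem~\ref{pep}.

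First I would fix a cardinal $J$ and pass to the projections $P_i\otimes I_{\ell^2(J)}\in B(H_u\otimes\ell^2(J))$. By the commutation theorem for von Neumann tensor products,
$$\{P_1\otimes I_{\ell^2(J)},\dots,P_n\otimes I_{\ell^2(J)}\}''=\{P_1,\dots,P_n\}''\,\bar\otimes\,\bb C I_{\ell^2(J)}=\cl A^{**}\,\bar\otimes\,\bb C I_{\ell^2(J)},$$
and therefore
$$\{P_1\otimes I_{\ell^2(J)}\}'\wedge\cdots\wedge\{P_n\otimes I_{\ell^2(J)}\}'=(\cl A^{**})'\,\bar\otimes\,B(\ell^2(J))=M_J((\cl A^{**})').$$
Applying EPH.3 to the $n$ projections $P_1\otimes I_{\ell^2(J)},\dots,P_n\otimes I_{\ell^2(J)}$ shows that $M_J((\cl A^{**})')$ is hyperreflexive. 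As $J$ was arbitrary, Theorem~\ref{pep} yields that $\cl A^{**}$ satisfies the WSP, and hence $\cl A$ satisfies the SP.

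The computation is short, and the one step requiring care is the identification $M_J((\cl A^{**})')=\{P_1\otimes I_{\ell^2(J)}\}'\wedge\cdots\wedge\{P_n\otimes I_{\ell^2(J)}\}'$. This is where the tensor-product commutation theorem is used to see that amplifying $\cl A^{**}$ by $B(\ell^2(J))$ corresponds exactly to intersecting the commutants of the \emph{amplified} generators, so that EPH.3 can be invoked at every cardinality rather than only on $H_u$. I expect this to be the crux: Theorem~\ref{pep} requires hyperreflexivity of all the $M_J((\cl A^{**})')$ simultaneously, and it is precisely the fact that each of these is again a finite intersection of commutants of projections that lets the finitely generated hypothesis feed directly into EPH.3.
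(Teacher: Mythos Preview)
Your proof is correct and follows essentially the same route as the paper: reduce to the WSP for $\cl A^{**}=W^*(P_1,\dots,P_n)$, use Theorem~\ref{pep} to reduce to hyperreflexivity of $M_J((\cl A^{**})')$ for each cardinal $J$, identify this with $\{P_1\otimes I_{\ell^2(J)}\}'\wedge\cdots\wedge\{P_n\otimes I_{\ell^2(J)}\}'$, and invoke EPH.3 on the amplified projections. The only cosmetic difference is that the paper packages the tensor identification inside the proof of Theorem~\ref{xxx} rather than appealing explicitly to the commutation theorem as you do.
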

\begin{proof} Assume that $\cl A$ is generated by the projections $P_1,...,P_n.$ We may assume that 
$$\cl A=C^*(P_1,...,P_n)\subseteq W^*(P_1,...,P_n)=\cl A^{**}=\Omega\subseteq B(H).$$
It suffices to prove that $\Omega$ satisfies WSP.

Let $J$ be a cardinal. From Theorem~\ref{pep} it suffices to prove that the algebra $M_J(\Omega^\prime)$ is hyperreflexive.
We have that 
$$M_J(\Omega^\prime)=\cap_{i=1}^n(\{P_i\}^\prime\bar \otimes B(l^2(J))).$$ 
By Lemma~\ref{xxx} this  algebra  
is hyperreflexive.

\end{proof}

\begin{lemma} Let $\cl A$ be a $C^*$-algebra and $n \in \bb N$.  Then $\cl A$ satisfies SP if and only if $M_n(\cl A)$ satisfies SP.
\end{lemma}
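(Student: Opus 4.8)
The plan is to prove both implications by a direct operator-theoretic argument, in each case reducing to the hypothesised SP of the other algebra together with the elementary fact that every bounded unital representation of $M_n(\bb C)$ is similar to a $*$-representation. Throughout I would assume $\cl A$ is unital and that all homomorphisms are unital; the general case reduces to this by passing to unitizations and by conjugating the idempotent $u(1)$ to a genuine projection, a standard maneuver. I identify $M_n(\cl A)$ with $\cl A\otimes M_n$ and write $e_{ij}$ for the standard matrix units of $M_n$, so that a typical element is $\sum_{ij} a_{ij}\otimes e_{ij}$.

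For the direction that $\cl A$ has SP implies $M_n(\cl A)$ has SP, let $v\colon M_n(\cl A)\to B(L)$ be a bounded unital homomorphism. First I would restrict $v$ to the scalar copy $\{1\otimes x: x\in M_n\}\cong M_n(\bb C)$: the images $f_{ij}=v(1\otimes e_{ij})$ form a (non-self-adjoint) system of matrix units, and the classical fact that a bounded representation of $M_n(\bb C)$ is similar to a $*$-representation produces an invertible $S_0\in B(L)$ and a decomposition $L\cong L_0\otimes\ell^2(\{1,\dots,n\})$ with $S_0^{-1}f_{ij}S_0=1_{L_0}\otimes e_{ij}$. After conjugating $v$ by $S_0$ I may assume $v(1\otimes e_{ij})=1_{L_0}\otimes e_{ij}$. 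Setting $w(a):=v(a\otimes e_{11})$ inside the corner $(1\otimes e_{11})B(L)(1\otimes e_{11})\cong B(L_0)$ gives a bounded unital homomorphism $w\colon\cl A\to B(L_0)$, and the identity $a\otimes e_{ij}=(1\otimes e_{i1})(a\otimes e_{11})(1\otimes e_{1j})$ shows $v=w\otimes\id_{M_n}$. Since $\cl A$ has SP, $w=T\pi T^{-1}$ for some invertible $T$ and $*$-homomorphism $\pi$, whence $v$ is conjugate to $\pi\otimes\id_{M_n}$ by $T\otimes I_n$ and so, via $S_0(T\otimes I_n)$, is similar to a $*$-homomorphism.

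For the converse, that $M_n(\cl A)$ has SP implies $\cl A$ has SP, let $u\colon\cl A\to B(K)$ be a bounded unital homomorphism and form the amplification $u_n:=u\otimes\id_{M_n}\colon M_n(\cl A)\to M_n(B(K))=B(K^n)$. Although $u$ need not be completely bounded, the crude estimate $\|(u(a_{ij}))\|\le n^2\|u\|\,\|(a_{ij})\|$ shows $u_n$ is a bounded homomorphism, and it is unital. Since $M_n(\cl A)$ has SP, there is an invertible $S\in B(K^n)$ with $\Pi:=S^{-1}u_n(\cdot)S$ a $*$-homomorphism. Writing $q:=\Pi(1\otimes e_{11})=S^{-1}(1\otimes e_{11})S$, the compression $\pi(a):=\Pi(a\otimes e_{11})|_{qK^n}$ is a unital $*$-homomorphism of $\cl A$ into $qB(K^n)q\cong B(qK^n)$, because $a\otimes e_{11}\mapsto\Pi(a\otimes e_{11})$ is multiplicative and $*$-preserving with support projection $q$. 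Finally the relation $u(a)\otimes e_{11}=S\,\Pi(a\otimes e_{11})\,S^{-1}$ shows that $\tilde S:=S|_{qK^n}\colon qK^n\to(1\otimes e_{11})K^n\cong K$ is an invertible operator implementing $u=\tilde S\,\pi\,\tilde S^{-1}$, so $u$ is similar to a $*$-homomorphism.

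The one genuinely non-formal ingredient, and the step I expect to require the most care, is the standardization of the matrix-unit system $\{f_{ij}\}$ in the first direction, i.e.\ that a bounded unital representation of $M_n(\bb C)$ is similar to a $*$-representation; this is exactly where finite-dimensionality enters and is what makes the tensor splitting $v=w\otimes\id_{M_n}$ available. A secondary, more tedious point is the reduction to the unital case: one must verify that passing to unitizations is compatible with the functor $M_n(-)$ (using $M_n(\widetilde{\cl A})$ rather than $\widetilde{M_n(\cl A)}$) and that a non-unital bounded homomorphism can be renormalized by conjugating the idempotent $u(1)$ to a projection. I anticipate these reductions to be routine but to account for most of the bookkeeping in a complete write-up.
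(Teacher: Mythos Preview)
Your proof is correct and, for the direction $\cl A$ has SP $\Rightarrow$ $M_n(\cl A)$ has SP, it is essentially identical to the paper's: restrict to the copy of $M_n(\bb C)$, use that $M_n$ has SP to straighten the matrix units, factor $v=w\otimes \id_{M_n}$, and apply SP for $\cl A$ to $w$.

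For the converse direction $M_n(\cl A)$ has SP $\Rightarrow$ $\cl A$ has SP, both arguments begin by amplifying $u$ to $u_n=u\otimes\id_{M_n}$ and invoking SP for $M_n(\cl A)$, but then diverge. The paper simply observes that $u_n$ being similar to a $*$-homomorphism forces $u_n$, and hence $u$, to be completely bounded, and then quotes the Haagerup--Paulsen theorem that a completely bounded homomorphism of a $C^*$-algebra is similar to a $*$-homomorphism. Your argument instead extracts the similarity for $u$ directly from the $(1,1)$ corner: with $\Pi=S^{-1}u_nS$ a $*$-homomorphism and $q=\Pi(1\otimes e_{11})$, the restriction $\tilde S=S|_{qK^n}:qK^n\to K$ intertwines $u$ with the compressed $*$-homomorphism $\pi$. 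This is a genuinely more elementary route, since it avoids the black-box equivalence of SP with complete boundedness; the paper's version is shorter to write but leans on that equivalence (which it uses freely elsewhere anyway). Both are valid, and your corner argument is a nice self-contained alternative.
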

\begin{proof} Assume that $M_n(\cl A)$ satisfies SP and let $u: \cl A \to B(H)$ be a bounded homomorphism.  Then $id_n \otimes u: M_n(\cl A) \to B( \bb C^n \otimes H)$ is a bounded homomorphism and so it is similar to a *-homomorphism, and hence completely bounded.  But this implies that $u$ is completely bounded and hence similar to a *-homomorphism.

Conversely, if $\cl A$ has SP and $\pi: M_n(\cl A) \to B(H)$ is a bounded homomorphism, then the restriction of $\pi$ to the $C^*$-algebra $M_n \simeq M_n \otimes 1_{\cl A} \subseteq M_n(\cl A)$ is a bounded homomorphism and since $M_n$ has SP, it is an invertible $S$ so that $\rho = S^{-1} \pi S$ is a *-homomorphism when restricted to $M_n$.  This allows us to decompose $\cl H \simeq \bb C^n \otimes  K$ in so that $\rho= id_n \otimes u: M_n(\cl A) \to B(\bb C^n \otimes K)$.  Since $\cl A$ has SP, there is an invertible $R$ on $K$ so that $R^{-1} u R: \cl A \to B(K)$ is a *-homomorphism. Conjugating $\pi$ by $(id_n \otimes R)S$ converts $\pi$ into a *-homomorphism.(Alternatively, since $\cl A$ has SP, $u$ is completely bounded and hence $id_n \otimes u$ and $\pi$ are comletely bounded.)
\end{proof}

\begin{lemma}\label{v} Let $\cl A$ be a $C^*$-algebra that is finitely generated.  Then $M_2(\cl A)$ is generated by finitely many projections.
\end{lemma}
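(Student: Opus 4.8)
The goal is to show that if $\cl A$ is generated by a finite set, then $M_2(\cl A)$ is generated by finitely many projections. The plan is to start from a finite generating set of $\cl A$, reduce it to self-adjoint generators, and then exploit the $2\times 2$ matrix structure to manufacture projections whose generated $C^*$-algebra is all of $M_2(\cl A)$.

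First I would reduce to self-adjoint generators. If $\cl A = C^*(x_1,\dots,x_m)$, then replacing each $x_j$ by its real and imaginary parts $\mathrm{Re}(x_j) = \tfrac12(x_j + x_j^*)$ and $\mathrm{Im}(x_j) = \tfrac1{2i}(x_j - x_j^*)$, I obtain a finite set of self-adjoint elements generating the same algebra. By rescaling I may assume each self-adjoint generator $a$ satisfies $\|a\| \le 1$, so that $\tfrac12(I + a)$ has spectrum in $[0,1]$; alternatively I may directly arrange $0 \le a \le 1$.

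Next I would convert each self-adjoint (positive, norm $\le 1$) generator $a \in \cl A$ into a projection living in $M_2(\cl A)$. The standard device is that for $0 \le a \le 1$, the matrix
$$
P_a = \begin{pmatrix} a & \sqrt{a - a^2} \\ \sqrt{a-a^2} & 1-a \end{pmatrix} \in M_2(\cl A)
$$
is a projection: one checks $P_a = P_a^*$ and $P_a^2 = P_a$ by a direct computation using $a$ and $1-a$ commuting. Crucially, the $C^*$-algebra generated by $P_a$ together with the diagonal projection $e_{11} = \begin{pmatrix}1&0\\0&0\end{pmatrix}$ recovers $a$ in the $(1,1)$-corner, since $e_{11} P_a e_{11} = \begin{pmatrix} a & 0 \\ 0 & 0\end{pmatrix}$. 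I would collect the finitely many projections $P_{a_1},\dots,P_{a_m}$ built from the self-adjoint generators, together with the two matrix-unit projections $e_{11}, e_{22}$ and one further projection (for instance $\tfrac12\begin{pmatrix}1&1\\1&1\end{pmatrix}$, or $e_{11}+$ a projection implementing the swap) that, combined with $e_{11}$, generates the full system of matrix units $e_{ij}$ of $M_2$.

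Finally I would verify that the $C^*$-algebra generated by this finite family of projections is all of $M_2(\cl A)$. The matrix units $e_{ij}$ let me move between corners, and conjugating the corner elements $e_{11}P_{a_j}e_{11} = a_j \otimes e_{11}$ by the $e_{ij}$ produces $a_j \otimes e_{kl}$ for all $k,l$; since the $a_j$ generate $\cl A$, the resulting algebra contains $\cl A \otimes e_{kl}$ for every $k,l$, hence all of $M_2(\cl A)$. The main point to be careful about is the generation of the matrix units: two projections suffice to generate $M_2(\bb C)$ inside $M_2(\cl A)$ precisely when they are in "general position," so I expect the only delicate step is choosing the auxiliary projection correctly so that together with $e_{11}$ it yields the swap (and hence all $e_{ij}$); the verification that the $P_{a_j}$ are genuine projections and that their corners recover the $a_j$ is routine.
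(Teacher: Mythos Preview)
Your argument is correct and complete (modulo the tacit assumption, shared by the paper, that $\cl A$ is unital or that one passes to the unitization). The route, however, differs from the paper's. After reducing to self-adjoint contractive generators $H_1,\dots,H_n$, the paper exponentiates to unitaries $U_j=e^{iH_j}$ (recovering $H_j$ from $U_j$ by inverse functional calculus on $[-1,1]$) and then uses the projections
\[
\begin{pmatrix} I_{\cl A} & 0\\ 0 & 0\end{pmatrix},\qquad
\tfrac12\begin{pmatrix} I_{\cl A} & I_{\cl A}\\ I_{\cl A} & I_{\cl A}\end{pmatrix},\qquad
\tfrac12\begin{pmatrix} I_{\cl A} & U_j\\ U_j^* & I_{\cl A}\end{pmatrix},\quad 1\le j\le n,
\]
so only $n+2$ projections are needed. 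Your approach instead stays with positive contractions $a_j$ and uses the dilation projections $P_{a_j}=\begin{pmatrix} a_j & \sqrt{a_j-a_j^2}\\ \sqrt{a_j-a_j^2} & 1-a_j\end{pmatrix}$, reading off $a_j$ from the $(1,1)$ corner. Both methods add $e_{11}$ and $\tfrac12\begin{pmatrix}1&1\\1&1\end{pmatrix}$ to produce the matrix units. The paper's version is slightly more economical in the projection count and avoids the square root, while yours avoids the exponential and the (easy but necessary) check that $H_j\in C^*(U_j,I)$; neither advantage is essential for the lemma as stated.
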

\begin{proof} By scaling we may assume that the generators are contractions. By taking real and imaginary parts of the generators, we may assume that they are self-adjoint elements of norm at most 1.
So let $\{ H_1,..., H_n \}$ be self-adjoint contractions that generate $\cl A.$
Then $U_j = e^{iH_j}, \, 1 \le j \le n$ are unitaries that generate $\cl A$.

It is easily checked that 
\[ \begin{pmatrix} I_{\cl A} & 0 \\ 0 & 0 \end{pmatrix}, 1/2 \begin{pmatrix} I_{\cl A} & I_{\cl A} \\ I_{\cl A} & I_{\cl A} \end{pmatrix}, 1/2 \begin{pmatrix} I_{\cl A} & U_j \\U_j^* & I_{\cl A} \end{pmatrix}, 1 \le j \le n,\]
are $n+2$ projections that generate $M_2(\cl A)$.
\end{proof}

\begin{theorem}\label{11} Let $\cl A$ be a $C^*$-algebra that is finitely generated.  Then under EPH.3 hypothesis, $\cl A$ satisfies SP.
\end{theorem}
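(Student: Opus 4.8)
The plan is to chain together the three preceding lemmas, since the substantive work has already been carried out there. The strategy is to pass to a matrix amplification where the generators can be replaced by projections, invoke the result for algebras generated by finitely many projections, and then descend back to $\cl A$.

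First I would apply Lemma~\ref{v}: because $\cl A$ is finitely generated, the algebra $M_2(\cl A)$ is generated by finitely many projections. The point of this step is that EPH.3 is a statement about finite meets of commutants $\{Q\}'$ of projections, and Lemma~\ref{main} only delivers the SP for $C^*$-algebras whose generating set consists of projections; an arbitrary finite generating set of $\cl A$ need not have this form. The device in Lemma~\ref{v} — replacing self-adjoint contractions $H_j$ by unitaries $e^{iH_j}$ and encoding each unitary $U_j$ as the off-diagonal of a $2\times 2$ projection $\tfrac12\left(\begin{smallmatrix} I & U_j \\ U_j^* & I\end{smallmatrix}\right)$ — is exactly what converts the problem into one to which Lemma~\ref{main} applies.

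Next I would apply Lemma~\ref{main} to $M_2(\cl A)$: under the EPH.3 hypothesis, any $C^*$-algebra generated by finitely many projections satisfies the SP, so $M_2(\cl A)$ satisfies the SP. Finally I would invoke the matrix-amplification lemma (the unnamed lemma immediately preceding Lemma~\ref{v}, in the case $n=2$), which states that $\cl A$ satisfies the SP if and only if $M_2(\cl A)$ does. Since $M_2(\cl A)$ satisfies the SP, it follows that $\cl A$ satisfies the SP, completing the proof.

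There is no serious obstacle remaining at this stage: the theorem is a direct synthesis, and the only thing to verify is that the hypotheses of each lemma are met in sequence (finite generation feeding Lemma~\ref{v}, a finite set of projections feeding Lemma~\ref{main}, and the equivalence for $M_2(\cl A)$ feeding the descent). If one wanted to flag a conceptual pressure point, it is the transition handled by Lemma~\ref{v}: the whole argument hinges on the fact that passing to $2\times 2$ matrices is enough to trade arbitrary generators for projections while preserving the SP in both directions, and this is precisely why the matrix-amplification equivalence is needed as the last step rather than a one-directional implication.
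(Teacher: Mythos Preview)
Your proposal is correct and matches the paper's intended argument exactly: the theorem is stated without proof immediately after Lemma~\ref{main}, the matrix-amplification lemma, and Lemma~\ref{v}, and the intended proof is precisely the three-step chain you describe (Lemma~\ref{v} to get projection generators for $M_2(\cl A)$, Lemma~\ref{main} to get SP for $M_2(\cl A)$, and the amplification equivalence to descend to $\cl A$). There is nothing to add or correct.
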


\begin{corollary}  Under EPH.3 hypothesis, the full and reduced $C^*$-algebras of any finitely generated discrete group satisfy SP.
\end{corollary}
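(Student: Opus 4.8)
The plan is to combine the two preceding results. The final corollary asserts that, under EPH.3, the full and reduced $C^*$-algebras of any finitely generated discrete group satisfy SP. The key observation is that these group $C^*$-algebras are themselves finitely generated as $C^*$-algebras whenever the underlying group is finitely generated, so the corollary should follow immediately from Theorem~\ref{11} once this finite-generation fact is established.

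First I would recall the relevant constructions. Let $G$ be a discrete group generated by a finite set $\{g_1,\dots,g_m\}$. In the full $C^*$-algebra $C^*(G)$ (respectively the reduced $C^*$-algebra $C^*_r(G)$), each group element $g$ is represented by a unitary $\lambda(g)$, and these unitaries span a dense $*$-subalgebra. I would argue that the finite set $\{\lambda(g_1),\dots,\lambda(g_m)\}$ generates the whole $C^*$-algebra: since every element of $G$ is a finite word in the $g_i$ and their inverses, and since $\lambda(g_i)^* = \lambda(g_i^{-1})$, the $*$-algebra generated by the $\lambda(g_i)$ contains $\lambda(g)$ for every $g\in G$, hence contains the dense span of the $\lambda(g)$, and therefore its norm closure is all of $C^*(G)$ (respectively $C^*_r(G)$). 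Thus both group $C^*$-algebras are finitely generated in the sense defined in the paper.

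With this in hand, the conclusion is a direct application of Theorem~\ref{11}: under the EPH.3 hypothesis, every finitely generated $C^*$-algebra satisfies SP, so in particular $C^*(G)$ and $C^*_r(G)$ satisfy SP. I do not anticipate a serious obstacle here, since the argument is essentially bookkeeping; the only point requiring a word of care is that the generating set must be closed under taking adjoints (or that one explicitly notes $\lambda(g_i)^*=\lambda(g_i^{-1})$) so that the generated $C^*$-algebra, and not merely the generated operator algebra, recovers the full span of the group unitaries. Everything substantive has already been done in Theorem~\ref{11}.
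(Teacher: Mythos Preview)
Your proposal is correct and matches the paper's approach: the paper states this corollary with no proof, treating it as an immediate consequence of Theorem~\ref{11}, and your argument simply fills in the evident detail that the full and reduced group $C^*$-algebras of a finitely generated discrete group are themselves finitely generated (by the unitaries corresponding to the group generators).
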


\begin{remark}\em{ If the SP holds for every separable C*-algebra, then it holds for every C*-algebra. One way to see this is to assume that the SP fails for $\cl A$, then by \cite[Theorem 9.1]{pau} there exists a bounded homomorphism $\rho: \cl A \to B(\cl H)$ that is not completely bounded. Hence there are elements $(a_{i,j,n}) \in M_n(\cl A)$ of norm one with $\| ( \rho(a_{i,j,n}) \| \to \infty$. If we let $\cl B$ be the separable C*-subalgebra of $\cl A$ generated by the set $\{ a_{i,j,n}: 1 \le i,j \le n, \, n \in \bb N \}$, then the restriction of $\rho$ to $\cl B$ is not completely bounded and so $\cl B$ fails the SP.

Furthermore, since every separable C*-algebra is a quotient of $C^*(\bb F_{\infty})$, the SP holds for every C*-algebra  if and only if it holds for $C^*(\bb F_{\infty})$, where $\bb F_{\infty}$ denotes the free group on countably infinitely many generators. By the above results, SP does hold for $C^*(\bb F_n)$ for every $n \in \bb N$.}
\end{remark} 

For the following result it helps to recall that if $\cl A$ is a non-unital C*-algebra and $\rho: \cl A \to B(\cl H)$ is a bounded homomorphism, then $\rho$ extends to a homomorphism of the unitization of $\cl A$ with the same norm.

\begin{proposition} Under the EPH.3 hypothesis, for each $n \in \bb N$ there is a function $f_n: [1, + \infty) \to [1, +\infty)$ such that if $\cl A$ is a C*-algebra with $n$ generators and $\rho: \cl A \to B(\cl H)$ is a homomorphism with $\| \rho \| \le c$, then $\| \rho \|_{cb} \le f_n(c)$.
\end{proposition}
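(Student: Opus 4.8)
The plan is to isolate a single, fixed finitely generated C*-algebra that dominates all $n$-generated algebras at once, so that a bound which must hold uniformly across all such algebras becomes a statement about bounded homomorphisms out of this one algebra. Let $\cl C_n$ denote the universal C*-algebra generated by $n$ contractions $u_1,\dots,u_n$; this is a separable, finitely generated C*-algebra, so by Theorem~\ref{11} it satisfies the SP under the EPH.3 hypothesis. For fixed $c\ge 1$ I would define
\[ f_n(c) = \max\Bigl(1,\ \sup\{ \|\psi\|_{cb} : \psi: \cl C_n \to B(\cl K)\ \text{a bounded homomorphism with}\ \|\psi\| \le c \}\Bigr), \]
the supremum running over all Hilbert spaces $\cl K$. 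The goal then splits into two claims: that $f_n(c)<\infty$, and that this single $f_n$ controls $\|\rho\|_{cb}$ for every $n$-generated algebra.

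First I would show $f_n(c)<\infty$ by a uniform boundedness (direct sum) argument. If the supremum were infinite there would be bounded homomorphisms $\psi_m:\cl C_n\to B(\cl K_m)$ with $\|\psi_m\|\le c$ but $\|\psi_m\|_{cb}\ge m$. Forming $\psi=\bigoplus_m \psi_m$ on $\bigoplus_m \cl K_m$ gives a homomorphism with $\|\psi\|=\sup_m\|\psi_m\|\le c$, hence bounded, while $\|\psi\|_{cb}\ge \sup_m\|\psi_m\|_{cb}=\infty$. Thus $\psi$ would be a bounded homomorphism of $\cl C_n$ that is not completely bounded; but the SP forces $\psi$ to be similar to a $*$-homomorphism and hence completely bounded, a contradiction. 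Since $\cl C_n$ satisfies the SP under EPH.3, we conclude $f_n(c)<\infty$.

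Second, I would transfer this bound to an arbitrary C*-algebra $\cl A$ generated by $n$ elements. After rescaling the generators I may assume they are contractions $a_1,\dots,a_n$, so the universal property of $\cl C_n$ furnishes a surjective $*$-homomorphism $q:\cl C_n\to\cl A$ with $q(u_i)=a_i$. Given $\rho:\cl A\to B(\cl H)$ with $\|\rho\|\le c$, set $\psi=\rho\circ q$; then $\psi$ is a bounded homomorphism of $\cl C_n$ with $\|\psi\|\le \|\rho\|\,\|q\|\le c$, so $\|\psi\|_{cb}\le f_n(c)$. Because $q$ is a surjective $*$-homomorphism it is a complete quotient map, so each amplification $M_k(q)$ is a metric surjection; hence every $(a_{ij})\in M_k(\cl A)$ of norm $<1$ lifts to some $(x_{ij})\in M_k(\cl C_n)$ of norm $<1$ with $q(x_{ij})=a_{ij}$, giving $\|(\rho(a_{ij}))\|=\|(\psi(x_{ij}))\|\le \|\psi\|_{cb}\le f_n(c)$. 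Taking suprema over $k$ and over such matrices yields $\|\rho\|_{cb}\le f_n(c)$, as required. For non-unital $\cl A$ one may alternatively first extend $\rho$ to the unitization with the same norm, as recalled above, and argue with the universal \emph{unital} C*-algebra on $n$ contractions.

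The conceptual crux is the reduction in the first step: a bound that must hold uniformly across all $n$-generated algebras is collapsed to a single algebra $\cl C_n$, after which the SP of $\cl C_n$ (through Theorem~\ref{11}) does the work via the direct-sum uniform boundedness argument. The main technical obstacle is the norm transfer of the second step, namely verifying that completely bounded norms pass correctly through $q$; this rests on the fact that surjective $*$-homomorphisms of C*-algebras are complete metric surjections, so that matrices of norm near one can be lifted without increasing the norm.
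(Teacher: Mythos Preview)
Your argument is correct and takes a genuinely different route from the paper's. The paper proceeds by contradiction without introducing a universal object: assuming no $f_n$ works, it extracts a sequence of $n$-generated unital $C^*$-algebras $\cl A_j$ and homomorphisms $\rho_j$ with $\|\rho_j\|\le c$ and $\|\rho_j\|_{cb}\to\infty$, forms the $c_0$-direct sum $\cl B=\bigoplus_j\cl A_j$, and observes that the direct sum map $\rho=\bigoplus_j\rho_j$ is bounded but not completely bounded. The key step in the paper is then to exhibit $n+1$ generators for $\cl B$ itself, namely $b_k=\bigoplus_j a_{k,j}/j$ for $1\le k\le n$ together with $b_0=\bigoplus_j 1_{\cl A_j}/j$, so that $\cl B$ is finitely generated and hence, under EPH.3, satisfies SP by Theorem~\ref{11}---a contradiction. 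You instead fix a single universal algebra $\cl C_n$ up front, prove the uniform bound there via a direct-sum argument, and then push it down through the complete quotient map $q:\cl C_n\to\cl A$. Your approach gives an explicit description of $f_n$ and isolates the role of the completely contractive quotient property, at the cost of invoking the existence of $\cl C_n$ and that lifting fact; the paper's approach avoids both of these but requires the small trick of rescaling by $1/j$ to make the $c_0$-direct sum finitely generated. At heart both arguments hinge on the same direct-sum mechanism to convert a failure of uniformity into a single bounded-but-not-cb homomorphism.
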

\begin{proof} Assume that for some $n$ no such function exists. Then there exists $c \in (1, +\infty)$ and a sequence of  unital C*-algebras $\cl A_j$ each with $n$ generators and a sequence of homomorphisms, $\rho_j: \cl A_J \to B(\cl H_j)$ such that
$\| \rho_j \| \le t$ but $\sup_j \|\rho_j \|_{cb} = + \infty.$. Let $\cl B$ be the C*-algebra that is the $c_0$-direct sum of the $\cl A_j$'s.

If we define a map $\rho: \cl B \to B(\oplus_2 \cl H_j)$ to be the direct sum of the maps $\rho_j$, then $\rho$ is a homomorphism with $\|\rho \| \le c$ and $\| \rho \|_{cb} = + \infty$, i.e., $\rho$ is not completely bounded.

However, if we choose generating sets $\{ a_{k,j} : 1 \le k \le n \} \subseteq \cl A_j$ with $\| a_{k,j} \| \le 1$ and set
$b_k = \oplus_j \frac{a_{k,j}}{j}, \, 1 \le k \le n$ and $b_0 = \oplus_j \frac{ 1_{\cl A_j}}{j}$, then this is a set of generators for $\cl B$.

Thus, $\cl B$ is a finitely generated C*-algebra that fails SP.  This contradiction completes the proof.
\end{proof}

\begin{theorem} Under the EPH.3 hypothesis, for each $n \in \bb N$ there are constants $K_n$ and $d_n \in \bb N$ such that if $\cl A$ is a unital C*-algebra with $n$ generators and $\rho: \cl A \to B(\cl H)$ is a bounded unital homomorphism, then
\[ \| \rho \|_{cb} \le K_n \|\rho \|^{d_n}.\]
\end{theorem}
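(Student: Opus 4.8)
The plan is to show that the qualitative bound $\|\rho\|_{cb} \le f_n(\|\rho\|)$ from the previous Proposition can be upgraded to a polynomial bound by exploiting a scaling/homogeneity argument via matrix amplifications. The key structural fact is that if $\cl A$ is generated by $n$ elements, then so is any full matrix amplification $M_m(\cl A)$ in a controlled way, and completely bounded norms behave multiplicatively under such amplifications. First I would invoke the previous Proposition to obtain, for the fixed integer $n$, a function $f_n$ with $\|\rho\|_{cb} \le f_n(c)$ whenever $\rho$ has $\|\rho\| \le c$ and $\cl A$ has $n$ generators. The goal is then to convert the a priori arbitrary growth of $f_n$ into genuinely polynomial growth.

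The main idea I would pursue is a self-improvement (tensor power) trick. Given a unital homomorphism $\rho: \cl A \to B(\cl H)$ with $\cl A$ having $n$ generators, consider amplifications and observe that $\|\rho\|_{cb} = \sup_m \|\id_m \otimes \rho\|$, where $\id_m \otimes \rho: M_m(\cl A) \to M_m(B(\cl H))$ is a bounded unital homomorphism with the same bound $\|\id_m \otimes \rho\| = \|\rho\|_{cb,m} \le \|\rho\|_{cb}$. The crucial point is that $M_m(\cl A)$ is again finitely generated, and by Lemma~\ref{v} its generators can be taken to be a number of projections depending only on $n$ (independent of $m$), so the qualitative bound applies \emph{uniformly} in $m$. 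By combining these amplifications with a fixed number of generators, I would set up a submultiplicative inequality of the form $f_n(c^k) \le f_n(c)^k$ or an analogous recursive estimate relating the completely bounded norm of $\rho$ to that of its tensor powers or amplifications, and then a standard argument shows that a submultiplicative function bounded by a finite value on each point must grow at most polynomially.

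The concrete mechanism I expect to use is the following. For each $k$, the homomorphism $\rho^{(k)}$ acting on a suitable amplification remains $n$-generated (after passing through $M_m(\cl A)$ and using the projection count from Lemma~\ref{v}), satisfies $\|\rho^{(k)}\| \le \|\rho\|^k$ by homogeneity of the bounded-homomorphism norm under iterated tensoring, yet has $\|\rho^{(k)}\|_{cb}$ comparable to $\|\rho\|_{cb}$. Feeding this into the uniform qualitative bound gives $\|\rho\|_{cb} \le f_N(\|\rho\|^k)$ for an integer $N$ depending only on $n$; optimizing over $k$ and using that the exponent can be absorbed yields $\|\rho\|_{cb} \le K_n \|\rho\|^{d_n}$ with $K_n, d_n$ depending only on $n$. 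The constants $d_n$ and $K_n$ emerge from the number of projections needed to generate $M_2(\cl A)$ (namely $n+2$ from Lemma~\ref{v}) together with the quantitative hyperreflexivity constants supplied under EPH.3 through Theorem~\ref{pep} and Remark~\ref{ep}.

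The hard part will be making the homogeneity/submultiplicativity step fully rigorous: bounded (as opposed to completely bounded) homomorphisms do not tensor in an obviously norm-controlled way, so I must either route the amplification through the matrix algebra structure $M_m(\cl A)$, where $\|\id_m \otimes \rho\|$ is genuinely controlled by $\|\rho\|_{cb}$, or extract the polynomial growth directly from the uniform hyperreflexivity constant $M$ of Remark~\ref{ep} applied to the bidual $\cl A^{**}$. In effect the quantitative content must come from tracking how the hyperreflexivity constant $k(M_J(\Omega'))$ depends on the number of generators $n$ rather than on $J$; the challenge is to show this dependence is polynomial in the norm $c$, which is precisely where the iterated-amplification bookkeeping becomes delicate.
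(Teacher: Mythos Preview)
Your proposal diverges from the paper's proof, and the divergence matters. The paper does not attempt any self-improvement or amplification argument at all: it simply invokes Pisier's similarity-degree theorem (cited as \cite[Theorem~19.11]{pau}), which says that a uniform bound $\|\rho\|_{cb}\le M$ for all unital homomorphisms with $\|\rho\|\le c$ (for a single fixed $c>1$) already implies a polynomial estimate $\|\pi\|_{cb}\le K\|\pi\|^d$ with $d$ and $K$ computed explicitly from $M$ and $c$. Since the preceding Proposition supplies $M=f_n(c)$ uniformly over all $n$-generated $C^*$-algebras, the theorem follows in one line. Pisier's internal mechanism is not a tensor-power trick but an iterated similarity argument: one repeatedly conjugates $\rho$ by an operator that reduces $\|\rho\|$ towards $1$, and the bound at the fixed level $c$ controls each step.

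Your alternative route has a genuine gap, and you correctly locate it yourself. The two desiderata you need for $\rho^{(k)}$ are incompatible. If $\rho^{(k)}=\mathrm{id}_m\otimes\rho$ on $M_m(\cl A)$, then the generator count is indeed bounded independently of $m$ (since $M_m$ is two-generated), but $\|\mathrm{id}_m\otimes\rho\|$ equals the $m$-th amplification norm of $\rho$, which can be as large as $\|\rho\|_{cb}$; feeding this into $f_N$ yields only the circular inequality $\|\rho\|_{cb}\le f_N(\|\rho\|_{cb})$. If instead $\rho^{(k)}=\rho^{\otimes k}$ on $\cl A^{\otimes k}$, then $\rho^{\otimes k}$ is not even known to be bounded unless $\rho$ is already completely bounded, the generator count grows linearly in $k$, and there is no reason for $\|\rho^{\otimes k}\|_{cb}$ to dominate $\|\rho\|_{cb}^k$. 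Neither option delivers the submultiplicative inequality you need, and no amount of bookkeeping with hyperreflexivity constants will manufacture it. The upgrade from qualitative to polynomial is genuinely the content of Pisier's theorem; you should cite it rather than try to reproduce it by amplification.
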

\begin{proof} By Pisier's theory of similarity and factorization degree \cite{pisier}(see in particular \cite[Theorem~19.11]{pau}), it is enough to know that for any fixed $c>1$ there is a number $M$ such that for every unital homomorphism $\rho: \cl A \to B(\cl H)$,  $\| \rho \| \le c \implies \|\rho \|_{cb} \le M$ to deduce that for any bounded unital homomorphism $\pi: \cl A \to B(\cl H)$, we have that
\[ \| \pi \|_{cb} \le K \|\pi \|^d,\]
for any $d$ satisfying $M < c^d(c-1)$ and $K = \frac{M(c^d -1)}{c^{d+1} - c^d - M}$.
Since we may use $M= f_n(c)$ for every C*-algebra with $n$ generators, the result follows.
\end{proof}
\begin{remark} By Pisier's remarkable theorem that the similarity degree and factorization degrees are identical \cite{pisier}, if the EPH.3 hypothesis holds, then we have that every C*-algebra with $n$ generators has factorization degree $(d_n, K_n)$ for the above constants.
\end{remark}

\section{Weak similarity and Finitely Generated von Neumann Algebras}

There is a long history of results concerning single generation for von Neumann algebras, another famous problem of Kadison. 
There is a fairly extensive literature on this problem. For a fairly recent survey see \cite{SS}.

The results of this section provide a small link between Kadison's two problems by showing that if every separably acting von Neumann algebra is singly generated, or even finitely generated, then EPH.3 is equivalent to the WSP property.

\begin{theorem}\label{maponto}Let $\cl A, \cl B$ be von Neumann algebras and let $\pi: \cl A\to \cl B$ be an onto  $w^*$-continuous $*$-homomorphism.
If $\cl A$ satisfies the WSP, then $\cl B$ also satisfies the WSP.
\end{theorem}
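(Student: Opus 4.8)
The plan is to convert the WSP of $\cl A$ into hyperreflexivity statements about the amplifications of the commutant, transport these to $\cl B'$ through Corollary \ref{hyp}, and then repair the gap that Corollary \ref{hyp} leaves at the small cardinals by a compression argument. First I would record that, since $\cl A$ satisfies the WSP, Theorem \ref{pep} guarantees that $M_J(\cl A')$ is hyperreflexive for every cardinal $J$, and Remark \ref{ep} supplies a finite constant $M$ with $k(M_J(\cl A'))\le M$ for all $J$. Applying Corollary \ref{hyp} to the onto $w^*$-continuous $*$-homomorphism $\pi$ then yields a cardinal $J_0$ with $k(M_J(\cl B'))\le k(M_J(\cl A'))\le M$ for all $J\supseteq J_0$. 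Hence $M_J(\cl B')$ is already hyperreflexive for every large cardinal, and by Theorem \ref{pep} the entire problem reduces to showing that $M_J(\cl B')$ is hyperreflexive for the remaining cardinals $J\subseteq J_0$ (in particular for $\cl B'$ itself).

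For these small cardinals I would argue by compression. Writing $\cl C=\cl B'$ and fixing $J\subseteq J_0$, let $P_J\in B(\ell^2(J_0))$ be the projection onto $\ell^2(J)$ and set $Q=I\otimes P_J$. Since $I\in\cl C$, we have $Q\in M_{J_0}(\cl C)=\cl C\bar\otimes B(\ell^2(J_0))$, and $Q\,M_{J_0}(\cl C)\,Q=\cl C\bar\otimes\bigl(P_J B(\ell^2(J_0))P_J\bigr)$ is exactly $M_J(\cl C)$ acting on the corner $Q(H\otimes\ell^2(J_0))$. The key point, which I would verify using the predual decomposition characterization of hyperreflexivity (Theorem \ref{Ha}), is that hyperreflexivity passes from $M_{J_0}(\cl C)$ to this corner. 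A trace class $T\in M_J(\cl C)_\bot$ may be regarded as an element of $M_{J_0}(\cl C)_\bot$ supported under $Q$; one decomposes it as $\sum_n R_n$ with $R_n$ rank one in $M_{J_0}(\cl C)_\bot$ and $\sum_n\|R_n\|_1<\infty$, and then replaces each $R_n$ by its compression $QR_nQ$. Because $Q\in M_{J_0}(\cl C)$, these compressions lie in $M_J(\cl C)_\bot$; they remain rank one (or become zero, in which case they are discarded), their trace norms only decrease, and they sum to $QTQ=T$. Theorem \ref{Ha} then gives hyperreflexivity of $M_J(\cl C)$, with a constant comparable to that of $M_{J_0}(\cl C)$.

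Combining the two regimes, and using that any cardinal $J$ is comparable to $J_0$, I conclude that $M_J(\cl B')$ is hyperreflexive for every cardinal $J$. A final appeal to Theorem \ref{pep} then shows that $\cl B$ satisfies the WSP, which completes the argument.

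I expect the main obstacle to be precisely the mismatch between Corollary \ref{hyp}, which controls $k(M_J(\cl B'))$ only for $J\supseteq J_0$, and the requirement in Theorem \ref{pep} that \emph{every} amplification be hyperreflexive, including the small ones. The compression step above is what bridges this gap, and the delicate verification is that $QR_nQ$ genuinely stays in the preannihilator $M_J(\cl C)_\bot$, which rests on the membership $Q\in M_{J_0}(\cl C)$ together with $\cl C$ being a von Neumann algebra.
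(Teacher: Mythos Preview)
Your proposal is correct and follows the same route as the paper: convert WSP to hyperreflexivity of every $M_J(\cl B')$ via Theorem~\ref{pep}, then invoke Corollary~\ref{hyp} together with the WSP of $\cl A$. The paper's three-line proof simply asserts that Corollary~\ref{hyp} yields $k(M_J(\cl B'))<\infty$ for an arbitrary cardinal $J$, tacitly absorbing the small-cardinal issue you isolate; your compression argument (passing from $M_{J_0}(\cl B')$ to its corner $M_J(\cl B')$ via the projection $Q=I\otimes P_J\in M_{J_0}(\cl B')$ and Theorem~\ref{Ha}) is a correct and standard way to make that step rigorous.
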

\begin{proof} 
Since $\cl A$ satisfies the WSP then, 
$$k(\cl A'\bar \otimes B(l^2))<\infty.$$
Corollary~\ref{hyp} implies that $$k(\cl B'\bar \otimes B(l^2))<\infty.$$ Theorem \ref{new} implies that $\cl B$ satisfies the WSP.
\end{proof}

\begin{theorem}\label{main2}  Every finitely generated C*-algebra satisfies the SP if and only if EPH.3 holds.
\end{theorem}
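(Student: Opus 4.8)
The plan is to prove the two implications separately, since one of them is already available in the excerpt.

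\emph{Direction 1 (EPH.3 $\Rightarrow$ SP for all finitely generated algebras).} This is exactly the content of Theorem~\ref{11}, so I would simply invoke it. For orientation, recall that Theorem~\ref{11} is obtained by using Lemma~\ref{v} to pass from a finitely generated $\cl A$ to $M_2(\cl A)$, which is generated by finitely many projections; then Lemma~\ref{main} shows $M_2(\cl A)$ satisfies the SP under EPH.3; and finally the equivalence of the SP for $\cl A$ and for $M_2(\cl A)$ returns the conclusion for $\cl A$. No fresh argument is needed here.

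\emph{Direction 2 (SP for all finitely generated algebras $\Rightarrow$ EPH.3).} This is where the real work lies. Fix projections $Q_1,\dots,Q_n \in B(H)$ and set $\cl A = C^*(Q_1,\dots,Q_n,I) \subseteq B(H)$ and $\Omega = W^*(Q_1,\dots,Q_n) \subseteq B(H)$. Then $\cl A$ is a unital, finitely generated $C^*$-algebra, so by hypothesis it satisfies the SP; by the stated equivalence between the SP of a $C^*$-algebra and the WSP of its bidual, $\cl A^{**}$ satisfies the WSP. The inclusion $\cl A \hookrightarrow B(H)$ is a unital $*$-representation, and I would use the universal property of the bidual to extend it to a unital, $w^*$-continuous $*$-homomorphism $\pi\colon \cl A^{**} \to B(H)$ whose range is the $w^*$-closure of $\cl A$, which is $\Omega$. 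Thus $\pi\colon \cl A^{**} \to \Omega$ is an onto $w^*$-continuous $*$-homomorphism.

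Next I would invoke Theorem~\ref{maponto}: since $\cl A^{**}$ satisfies the WSP and $\pi$ is a surjective $w^*$-continuous $*$-homomorphism, $\Omega$ satisfies the WSP. By Theorem~\ref{pep} this forces $M_J(\Omega')$ to be hyperreflexive for every cardinal $J$, and taking $J=1$ gives that $\Omega'$ is hyperreflexive. It then remains only to identify the algebra: the commutant depends solely on the generators, so $\Omega' = W^*(Q_1,\dots,Q_n)' = \bigcap_{i=1}^n \{Q_i\}' = \{Q_1\}' \wedge \cdots \wedge \{Q_n\}'$. Hence this algebra is hyperreflexive, which is precisely EPH.3.

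The step I expect to demand the most care is the transition from the abstract bidual $\cl A^{**}$ to the concretely represented $\Omega$ on $H$: one must check that the normal extension of the inclusion is unital and has range exactly $\Omega$, and the conceptual device that makes the whole argument run is Theorem~\ref{maponto}, which transports the WSP across the surjection $\pi$. Including $I$ in the generating set of $\cl A$ is a small but convenient move that avoids the degeneracy and non-unitality complications a bare $C^*(Q_1,\dots,Q_n)$ might create.
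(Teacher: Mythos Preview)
Your proposal is correct and follows essentially the same route as the paper: invoke Theorem~\ref{11} for one direction, and for the converse take $\cl A=C^*(Q_1,\dots,Q_n)$, pass to $\cl A^{**}$ (which has WSP), push WSP forward along the normal surjection $\cl A^{**}\to\Omega=W^*(Q_1,\dots,Q_n)$ via Theorem~\ref{maponto}, and then apply Theorem~\ref{pep} to conclude that $\Omega'=\{Q_1\}'\wedge\cdots\wedge\{Q_n\}'$ is hyperreflexive. Your inclusion of $I$ among the generators is a harmless cosmetic precaution, and your explicit citation of Theorem~\ref{maponto} is in fact cleaner than the paper's own cross-reference at that step.
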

\begin{proof} It suffices to prove the converse of Theorem~\ref{11}.

Let $ Q_1,...,Q_n$ be projections in $B(H),$ let $\cl A= C^*(Q_1,...,Q_n)$ and let $\Omega= \cl A^{''} = VN(Q_1,...,Q_n)$ so that $\Omega' = {Q_1}' \cap... \cap {Q_n}'.$

Since $\cl A$  satisfies SP, $\cl A^{**}$ satisfies WSP.  But there is a wk*-continuous *-homomorphism from $\cl A^{**}$ onto $\Omega$ so by Lemma \ref{xxx}, $\Omega$ satisfies WSP.

Hence by Theorem \ref{ep},  $\Omega'$  is completely hyperreflexive. Hence EPH.3 holds.
\end{proof}

\begin{lemma}\label{5} Let $\cl A$ be a von Neumann algebra weakly generated by a finite set of projections. Then under the EPH.3 hypothesis,  $\cl A$ satisfies the WSP.
\end{lemma}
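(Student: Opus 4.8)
The plan is to reduce the weak (von Neumann) generation case to the norm-closed (C*) generation case that has already been handled. Let $\cl A$ be weakly generated by projections $Q_1, \dots, Q_n$, and set $\cl C = C^*(Q_1, \dots, Q_n)$, the norm-closed C*-algebra they generate. Then $\cl C$ is a finitely generated C*-algebra whose weak closure is $\cl A = \cl C''$, and in particular $\cl A' = \cl C'$. By Theorem~\ref{11} (which rests on EPH.3), the C*-algebra $\cl C$ satisfies the SP.

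Next I would pass from SP of $\cl C$ to WSP of $\cl A$. The Remark from \cite{ep} gives that $\cl C$ having SP is equivalent to $\cl C^{**}$ having WSP. Now $\cl C^{**}$ is a von Neumann algebra that admits a canonical $w^*$-continuous $*$-homomorphism onto its weak closure $\cl A = \cl C''$: concretely, the universal representation of $\cl C$ restricts to the given representation on $H$, inducing a central-projection cut-down of $\cl C^{**}$ onto $\cl A$. Applying Theorem~\ref{maponto} to this surjection, the WSP of $\cl C^{**}$ transfers to the WSP of $\cl A$, which is exactly the desired conclusion.

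The main step to verify carefully is the existence of the $w^*$-continuous $*$-homomorphism from $\cl C^{**}$ onto $\cl A$. This is the standard fact that for a concretely represented C*-algebra $\cl C \subseteq B(H)$, the inclusion extends uniquely to a normal $*$-homomorphism $\cl C^{**} \to B(H)$ whose range is precisely the weak closure $\cl C'' = \cl A$, and this map is surjective onto $\cl A$ by construction. Once this is in hand, Theorem~\ref{maponto} applies verbatim since both $\cl C^{**}$ and $\cl A$ are von Neumann algebras and the map is onto and $w^*$-continuous.

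I expect the only genuine obstacle to be bookkeeping rather than substance: one must make sure that weak generation of $\cl A$ by the $Q_i$ really yields $\cl A = \cl C''$ (so that the surjection lands on all of $\cl A$), and that Theorem~\ref{11} applies to $\cl C$, which requires $\cl C$ to be finitely generated as a C*-algebra — this holds since $\{Q_1, \dots, Q_n\}$ is a finite generating set. With these identifications the proof is a short chain: Theorem~\ref{11} gives SP for $\cl C$, the Remark of \cite{ep} upgrades this to WSP for $\cl C^{**}$, and Theorem~\ref{maponto} pushes WSP down to $\cl A$.
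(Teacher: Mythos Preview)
Your proof is correct and not circular (Theorem~\ref{11}, the Remark from \cite{ep}, and Theorem~\ref{maponto} all precede Lemma~\ref{5} and do not rely on it), but it takes a different route from the paper. The paper argues directly: by Theorem~\ref{pep} it suffices to show that $M_J(\cl A')$ is hyperreflexive for every cardinal $J$; since $\cl A' = \bigcap_{i=1}^n \{P_i\}'$, one has $M_J(\cl A') = \bigcap_{i=1}^n \{P_i \otimes I_{\ell^2(J)}\}'$, and EPH.3 applied to the projections $P_i \otimes I_{\ell^2(J)}$ yields hyperreflexivity immediately. Your route instead detours through the $C^*$-case: Theorem~\ref{11} gives SP for $\cl C = C^*(Q_1,\dots,Q_n)$, hence WSP for $\cl C^{**}$, and Theorem~\ref{maponto} pushes WSP down along the normal surjection $\cl C^{**} \to \cl C'' = \cl A$. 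This is logically sound, but less self-contained: it imports the full machinery behind Theorem~\ref{11} (in particular the $M_2$ trick of Lemma~\ref{v}) together with the hyperreflexivity transfer hidden inside Theorem~\ref{maponto}, whereas the paper's proof needs nothing beyond EPH.3 and Theorem~\ref{pep}. On the plus side, your approach makes transparent that the von Neumann statement is really a corollary of the $C^*$-statement already established, which is exactly the logic the paper uses later in the converse direction of Theorem~\ref{main2}.
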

\begin{proof} Assume that $\cl A$ is generated by the projections $P_1,...,P_n.$ . From Theorem \ref{new} it suffices to prove that the algebra $\cl A^\prime\bar \otimes B(l^2)$ is hyperreflexive.
We have that 
$$\cl A^\prime\bar \otimes B(l^2)=\cap_{i=1}^n(\{P_i\}^\prime\bar \otimes B(l^2))=(\cap_{i=1}^n(\{P_i\}^\prime)\bar \otimes B(l^2).$$ 
 
We have that 
$$\cl A^\prime\bar \otimes B(l^2)=\cap_{i=1}^n(\{P_i\}^\prime\bar \otimes B(l^2))=(\cap_{i=1}^n \{P_i\}^\prime )\bar \otimes B(l^2)).$$ By Lemma \ref{xxx} this algebra is 
 hyperreflexive.

\end{proof}

\begin{lemma}\label{6} Let $\cl A$ be a von Neumann algebra and $n\in \bb N.$ Then  $\cl A$ satisfies the WSP if and only if $M_n(\cl A)$ satisfies the WSP.
\end{lemma}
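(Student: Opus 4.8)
\textbf{Proof proposal for Lemma~\ref{6}.} The plan is to mirror the structure of the earlier lemma that established the analogous equivalence for the SP, adapting every step to respect $w^*$-continuity. The two directions are not symmetric in difficulty, so I would handle them separately.

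For the easy direction, suppose $M_n(\cl A)$ satisfies the WSP and let $u:\cl A\to B(H)$ be a $w^*$-continuous bounded homomorphism. First I would observe that the amplification $\id_n\otimes u: M_n(\cl A)\to B(\bb C^n\otimes H)$ is again a bounded homomorphism, and that it is $w^*$-continuous because $u$ is and amplification preserves $w^*$-continuity entrywise. By hypothesis $\id_n\otimes u$ is similar to a $*$-homomorphism, hence completely bounded; restricting the completely bounded norm down to the corner recovers that $u$ itself is completely bounded, and by the standard similarity theory a completely bounded $w^*$-continuous homomorphism is similar to a $*$-homomorphism. This step is routine and parallels the SP argument verbatim, the only new ingredient being the remark that $w^*$-continuity is inherited by the amplification and by the restriction to the corner.

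For the converse, suppose $\cl A$ satisfies the WSP and let $\pi:M_n(\cl A)\to B(H)$ be a $w^*$-continuous bounded homomorphism. The plan is to restrict $\pi$ to the unital copy of $M_n\simeq M_n\otimes 1_{\cl A}$ sitting inside $M_n(\cl A)$; since $M_n$ is finite dimensional its (bounded, unital) representation is automatically similar to a $*$-representation, yielding an invertible $S$ so that conjugation by $S$ turns $\pi|_{M_n}$ into a $*$-homomorphism. This diagonalizes the system of matrix units and lets me write $\cl H\simeq \bb C^n\otimes K$ so that $S^{-1}\pi S=\id_n\otimes u$ for a homomorphism $u:\cl A\to B(K)$. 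Here I must verify that $u$ is itself $w^*$-continuous: this follows because $u$ is obtained from $\pi$ by compressing to a corner (after the fixed similarity $S$), and compression together with a $*$-isomorphism preserves $w^*$-continuity. Then, since $\cl A$ has the WSP, $u$ is similar to a $*$-homomorphism via some invertible $R$ on $K$, and conjugating by $(\id_n\otimes R)S$ converts $\pi$ into a $*$-homomorphism.

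The main obstacle is the bookkeeping of $w^*$-continuity in the converse: in the SP case one may freely pass to complete boundedness without worrying about continuity, but here every intermediate map — the restriction to $M_n$, the compression producing $u$, and the final conjugation — must be checked to stay $w^*$-continuous so that the WSP hypothesis on $\cl A$ is actually applicable to $u$. I expect the cleanest route is to note that the similarity $S$ arising from $M_n$ is a fixed invertible operator, so conjugation by it is a $w^*$-homeomorphism of $B(H)$, and that the identification $\cl H\simeq\bb C^n\otimes K$ with $S^{-1}\pi S=\id_n\otimes u$ forces $u$ to be $w^*$-continuous precisely because $\id_n\otimes u$ is. Everything else is the same linear algebra of matrix units used in the SP lemma.
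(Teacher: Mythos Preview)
Your forward direction (from $M_n(\cl A)$ having WSP to $\cl A$ having WSP) is exactly the paper's argument: amplify $u$ to $\id_n\otimes u$, invoke WSP on $M_n(\cl A)$ to get similarity to a $*$-homomorphism, deduce complete boundedness, and pass it back down to $u$.

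For the converse the two approaches diverge. The paper does not argue directly at all; it simply cites \cite[Corollary~4.7]{ep} and moves on. You instead give a self-contained argument mirroring the SP lemma: restrict $\pi$ to the finite-dimensional copy $M_n\otimes 1_{\cl A}$, conjugate by a fixed invertible $S$ to make that restriction a $*$-homomorphism, decompose $H\simeq\bb C^n\otimes K$ via the resulting matrix units, read off $u:\cl A\to B(K)$ with $S^{-1}\pi S=\id_n\otimes u$, verify that $u$ inherits $w^*$-continuity because it is obtained from the $w^*$-continuous map $S^{-1}\pi S$ by a fixed corner compression, and then apply the WSP hypothesis on $\cl A$ to $u$. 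This is correct, and the points you single out as needing care (that conjugation by a fixed $S$ is a $w^*$-homeomorphism, and that the corner compression preserves $w^*$-continuity) are precisely the right ones. Your route has the advantage of being explicit and independent of the external reference; the paper's route is shorter on the page but pushes the actual work into \cite{ep}.
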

\begin{proof} Assume that $M_n(\cl A)$ satisfies WSP and let $u: \cl A \to B(H)$ be a $w^*$-continuous, bounded homomorphism.  Then $id_n \otimes u: M_n(\cl A) \to B( \bb C^n \otimes H)$ is a $w^*$-continuous bounded homomorphism and so it is similar to a *-homomorphism, and hence completely bounded.  But this implies that $u$ is completely bounded and hence similar to a *-homomorphism.

For the converse  direction we use Corollary 4.7 in \cite{ep}.
\end{proof}

\begin{lemma}\label{7} Let $\cl A$ be a von Neumann algebra weakly generated by a finite set.  Then  the algebra $M_2(\cl A)$ is weakly generated by a finite set of projections. 
\end{lemma}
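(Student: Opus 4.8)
The plan is to mirror the proof of Lemma~\ref{v} in the von Neumann algebra setting, tracking the extra requirement that the generating set be finite when $\cl A$ is only \emph{weakly} generated by a finite set. First I would reduce to self-adjoint unitary-valued data. By hypothesis $\cl A = W^*(X_1,\dots,X_m)$ for some finite set $\{X_1,\dots,X_m\}$; after scaling each $X_j$ to be a contraction and splitting into real and imaginary parts, I obtain self-adjoint contractions $\{H_1,\dots,H_n\}$ that still weakly generate $\cl A$. Setting $U_j = e^{iH_j}$ produces unitaries $U_1,\dots,U_n$; since each $H_j$ lies in the von Neumann algebra generated by $U_j$ (via the $w^*$-continuous functional calculus $H_j = -i\log U_j$ on the spectrum), the unitaries $\{U_1,\dots,U_n\}$ weakly generate $\cl A$.

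Next I would exhibit the explicit projections in $M_2(\cl A)$. Exactly as in Lemma~\ref{v}, consider the $n+2$ projections
\[
\begin{pmatrix} I_{\cl A} & 0 \\ 0 & 0 \end{pmatrix},\quad
\tfrac{1}{2}\begin{pmatrix} I_{\cl A} & I_{\cl A} \\ I_{\cl A} & I_{\cl A} \end{pmatrix},\quad
\tfrac{1}{2}\begin{pmatrix} I_{\cl A} & U_j \\ U_j^* & I_{\cl A} \end{pmatrix},\ \ 1 \le j \le n.
\]
Each is easily checked to be a self-adjoint idempotent in $M_2(\cl A)$. The content of the lemma is that these weakly generate $M_2(\cl A)$. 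From the first two projections one recovers the four matrix units $e_{11}, e_{22}, e_{12}, e_{21}$ (tensored with $I_{\cl A}$) inside the weakly closed algebra they generate, just as in the $C^*$ case. Combining the $j$-th projection with these matrix units yields $U_j$ (and $U_j^*$) in the upper-right corner, hence each $U_j \otimes e_{12}$, and therefore $U_j$ itself appears in each corner of the $w^*$-closed algebra generated by the projections.

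The one genuinely new point, and the step I expect to be the main obstacle, is passing from ``these elements lie in the weakly closed generated algebra'' to ``the full $M_2(\cl A)$ is generated.'' In the $C^*$ setting of Lemma~\ref{v} the unitaries $U_j$ generate $\cl A$ as a norm-closed algebra automatically, but here I must invoke that $\{U_1,\dots,U_n\}$ weakly generate $\cl A$: the von Neumann algebra generated by the projections contains $U_j \otimes e_{k\ell}$ for all $j$ and all matrix units, hence contains $\cl B \otimes e_{k\ell}$ where $\cl B = W^*(U_1,\dots,U_n)$, and since $\cl B = \cl A$ this is all of $M_2(\cl A) = \cl A \otimes M_2$. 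I would therefore state the lemma's conclusion for the \emph{weak} (von Neumann algebra) generation and make explicit that the $w^*$-closure is what upgrades the norm-dense corner computations to the full matrix algebra.

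\begin{proof}
By scaling we may assume that the finite weakly generating set consists of contractions, and by taking real and imaginary parts we may assume they are self-adjoint contractions $\{H_1,\dots,H_n\}$ weakly generating $\cl A$. Put $U_j = e^{iH_j}$. Since $H_j$ is recovered from $U_j$ by $w^*$-continuous functional calculus, the unitaries $\{U_1,\dots,U_n\}$ weakly generate $\cl A$; that is, $W^*(U_1,\dots,U_n) = \cl A$.

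As in Lemma~\ref{v}, the $n+2$ elements
\[
P_0 = \begin{pmatrix} I_{\cl A} & 0 \\ 0 & 0 \end{pmatrix},\quad
P_\infty = \tfrac{1}{2}\begin{pmatrix} I_{\cl A} & I_{\cl A} \\ I_{\cl A} & I_{\cl A} \end{pmatrix},\quad
P_j = \tfrac{1}{2}\begin{pmatrix} I_{\cl A} & U_j \\ U_j^* & I_{\cl A} \end{pmatrix},\ \ 1 \le j \le n,
\]
are projections in $M_2(\cl A)$. Let $\cl M$ be the von Neumann algebra they generate. From $P_0$ and $P_\infty$ one obtains, by the same algebraic manipulations as in Lemma~\ref{v}, the matrix units $e_{11}\otimes I_{\cl A}$, $e_{22}\otimes I_{\cl A}$, $e_{12}\otimes I_{\cl A}$ and $e_{21}\otimes I_{\cl A}$ inside $\cl M$. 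Multiplying $P_j$ by these matrix units yields $U_j \otimes e_{12} \in \cl M$, and hence $U_j$ appears in each corner of $\cl M$. Consequently $\cl M$ contains $U_j \otimes e_{k\ell}$ for all $j$ and all $1 \le k,\ell \le 2$, so $\cl M$ contains $W^*(U_1,\dots,U_n)\otimes e_{k\ell} = \cl A \otimes e_{k\ell}$ for each matrix unit. Therefore $\cl M = \cl A \otimes M_2 = M_2(\cl A)$, and $M_2(\cl A)$ is weakly generated by the finite set of projections $\{P_0, P_\infty, P_1,\dots,P_n\}$.
\end{proof}
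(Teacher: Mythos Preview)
Your proof is correct and uses the same underlying construction as the paper: the explicit $n+2$ projections from Lemma~\ref{v}. The difference is purely organizational. The paper does not redo the construction; it simply lets $\cl A_0 = C^*(\cl C)$ be the $C^*$-algebra generated by the finite weakly generating set $\cl C$, cites Lemma~\ref{v} to obtain finitely many projections generating $M_2(\cl A_0)$ as a $C^*$-algebra, and then observes that $M_2(\cl A) = \overline{M_2(\cl A_0)}^{w^*}$, so the same projections weakly generate $M_2(\cl A)$. Your version unpacks this by rerunning the Lemma~\ref{v} argument in the von Neumann setting, which is fine but unnecessary once Lemma~\ref{v} is available; the paper's route is shorter precisely because the passage to the weak closure is immediate and requires no further verification of the matrix-unit manipulations.
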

\begin{proof} Let $\cl C$ be a selfadjoint finite set of 
 projections such that $\cl A=\cl C''.$ Define 
$\cl A_0=C^*(\cl C).$ Then $\cl A=\overline{\cl A_0}^{w^*}$ and $$M_2(\cl A)=\overline{M_2(\cl A_0)}^{w^*}.$$

By Lemma \ref{v} $M_2(\cl A_0)$ is finitely genarated by projections, thus $M_2(\cl A)$ is  weakly generated by a finite set of projections. 
\end{proof}

\begin{theorem}\label{8} Let $\cl A$ be a von Neumann algebra weakly generated by a finite set. Then  under the EPH.3 hypothesis, $\cl A$ satisfies the WSP. Conversely, if every von Neumann algebra that is weakly generated by a finite set satisfies the WSP, then EPH.3 holds.
\end{theorem}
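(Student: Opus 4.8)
The plan is to prove the two implications separately, assembling results already established in this section together with Theorem~\ref{pep}; neither direction carries substantive analytic content beyond what those lemmas already provide.

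For the forward implication I would argue as follows. Suppose $\cl A$ is weakly generated by a finite set and EPH.3 holds. The generating set is not assumed to consist of projections, so the first step is to pass to the amplification $M_2(\cl A)$: by Lemma~\ref{7}, $M_2(\cl A)$ is weakly generated by a finite set of \emph{projections}. At that point Lemma~\ref{5} applies directly to $M_2(\cl A)$ and gives, under EPH.3, that $M_2(\cl A)$ satisfies the WSP. Finally I would invoke Lemma~\ref{6} with $n = 2$ to pull the WSP back from $M_2(\cl A)$ to $\cl A$. This three-step chain (Lemma~\ref{7}, then Lemma~\ref{5}, then Lemma~\ref{6}) is the exact von Neumann-algebra analogue of the path used for the SP in Theorem~\ref{11}.

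For the converse I would assume that every von Neumann algebra weakly generated by a finite set satisfies the WSP, and verify EPH.3 directly. Given arbitrary projections $Q_1, \dots, Q_n \in B(H)$, set $\Omega = VN(Q_1, \dots, Q_n)$; by construction $\Omega$ is weakly generated by the finite set of projections $\{Q_1, \dots, Q_n\}$, so the hypothesis yields that $\Omega$ satisfies the WSP. Theorem~\ref{pep} then shows that $M_J(\Omega')$ is hyperreflexive for every cardinal $J$, and specializing to $J = 1$ gives that $\Omega'$ is hyperreflexive. Since $\Omega' = \{Q_1\}' \cap \dots \cap \{Q_n\}' = \{Q_1\}' \wedge \dots \wedge \{Q_n\}'$ and the $Q_i$ were arbitrary, EPH.3 follows.

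I do not expect a genuine obstacle here, since the analytic work lives entirely in the earlier results — the hyperreflexivity-extension machinery of Section~2 feeding Lemma~\ref{5}, and the matrix-amplification equivalence of Lemma~\ref{6}. The only point that needs care is conceptual rather than computational: an arbitrary finite generating set must first be upgraded to a generating set of projections, which is precisely why the detour through $M_2(\cl A)$ via Lemma~\ref{7} is required in the forward direction before Lemma~\ref{5} can be applied.
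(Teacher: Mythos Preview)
Your proposal is correct and matches the paper's proof essentially line for line: the forward direction is exactly the chain Lemma~\ref{7} $\Rightarrow$ Lemma~\ref{5} $\Rightarrow$ Lemma~\ref{6}, and the converse is the same application of Theorem~\ref{pep} to $\Omega = VN(Q_1,\dots,Q_n)$ to conclude that $\Omega' = \{Q_1\}'\wedge\cdots\wedge\{Q_n\}'$ is hyperreflexive.
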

\begin{proof} By Lemmas \ref{5}, \ref{6}, \ref{7} the algebra $\cl A$ satisfies the WSP.

Conversely, let $Q_1,..., Q_n \in B(H)$ be projections and let $\cl A$ be the von Neumann algebra generated by $Q_1,..., Q_n$. Then $\cl A$ satisfies the WSP and so $\cl A^{\prime} = \{Q_1\}^{\prime} \wedge \cdots \{ Q_n \}^{\prime}$ is hyperreflexive by \cite{ep}.
\end{proof}

\begin{corollary} If every von Neumann algebra acting on a separable Hilbert space is singly generated, then EPH.1 and EPH.3 are equivalent.
\end{corollary}
\begin{proof} We already know that EPH.1 implies EPH.3.  Assuming that EPH.3 and the single generator property hold, then we have that every von Neumann algebra on a separable Hilbert space satisfies WSP. From this it follows that every von Neumann algebra satisfies the WSP, \cite{ep}. Thus, every von Neumann algebra is hyperreflexive and so EPH.1 holds.
\end{proof}

\begin{corollary}   Under the EPH.3 hypothesis, the group von Neumann algebra $VN(G)$ of any finitely generated discrete group satisfies the WSP.
\end{corollary}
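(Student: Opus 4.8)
The plan is to reduce the statement directly to Theorem~\ref{8}, which asserts that under EPH.3 every von Neumann algebra weakly generated by a finite set satisfies the WSP. Thus it suffices to exhibit a finite weak generating set for $VN(G)$ whenever $G$ is a finitely generated discrete group.

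First I would fix generators $g_1,\dots,g_n$ of $G$ and pass to the left regular representation $\lambda\colon G\to B(\ell^2(G))$, recalling that $VN(G)$ is by definition $\lambda(G)''$. The key observation is that every element of $G$ is a finite word in $g_1^{\pm1},\dots,g_n^{\pm1}$, so that $\lambda(g)$ lies in the $*$-algebra generated by the unitaries $\lambda(g_1),\dots,\lambda(g_n)$ for every $g\in G$; here one uses that the inverses $\lambda(g_i)^{-1}=\lambda(g_i)^*$ are automatically available in this $*$-algebra. Consequently $\lambda(G)$ is contained in the von Neumann algebra $\{\lambda(g_1),\dots,\lambda(g_n)\}''$, and taking double commutants yields
$$VN(G)=\lambda(G)''=\{\lambda(g_1),\dots,\lambda(g_n)\}''.$$
Hence $VN(G)$ is weakly generated by the finite set $\{\lambda(g_1),\dots,\lambda(g_n)\}$.

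With this in hand, I would simply invoke Theorem~\ref{8}: since $VN(G)$ is weakly generated by a finite set, the EPH.3 hypothesis forces $VN(G)$ to satisfy the WSP. There is no genuine obstacle here; the entire content is the elementary remark that finite generation of the group passes, through the regular representation, to weak finite generation of the associated von Neumann algebra. The only point demanding any care is the distinction between generating as a group and generating as a von Neumann algebra, but these coincide once one notes that self-adjointness of the generating set is built in via $\lambda(g_i)^*=\lambda(g_i^{-1})$.
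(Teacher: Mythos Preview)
Your proof is correct and follows exactly the paper's approach: the paper's own proof consists of the single sentence ``The algebra $VN(G)$ is weakly finitely generated, so the claim is implied from Theorem~\ref{8},'' and you have simply spelled out the (elementary) reason why $VN(G)$ is weakly finitely generated when $G$ is.
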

\begin{proof} The algebra $VN(G)$ is weakly finitely generated, so the claim is implied from Theorem \ref{8}.
\end{proof}

There are many von Neumann algebras that are known to be finitely generated and, hence, under the EPH.3 hypothesis, satisfy the WSP.

We now turn our attention to EPH.2, which lies between EPH.1 and EPH.3.

\begin{theorem}\label{9} Let $\cl A, \cl B$ be $C^*$-algebras and $F\subseteq \cl B$ be a finite set such that 
$$\cl B=C^*(\cl A\cup F).$$
If $\cl A$ satisfies SP then  under EPH.2 hypothesis so does $\cl B.$
\end{theorem}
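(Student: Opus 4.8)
The plan is to lift the problem to the level of von Neumann algebras, where EPH.2 can be applied, exploiting the equivalence that $\cl A$ satisfies SP if and only if $\cl A^{**}$ satisfies WSP. The first step is to reduce the finite set $F$ to a finite set of \emph{projections}. After replacing the elements of $F$ by their real and imaginary parts, scaling, and exponentiating self-adjoint contractions exactly as in Lemma~\ref{v}, we may assume $F=\{U_1,\dots,U_n\}$ consists of unitaries. Passing to $M_2$, the $n+2$ projections
\[\begin{pmatrix} 1 & 0 \\ 0 & 0\end{pmatrix},\quad \tfrac12\begin{pmatrix} 1 & 1 \\ 1 & 1\end{pmatrix},\quad \tfrac12\begin{pmatrix} 1 & U_j \\ U_j^* & 1\end{pmatrix},\ 1\le j\le n,\]
together with $M_2(\cl A)$ generate $M_2(\cl B)$ as a $C^*$-algebra: the first two projections produce the matrix units, which in turn extract the $U_j$ from the corners of the remaining projections, so that $C^*\big(M_2(\cl A)$ together with these projections$\big)$ contains $\cl A$ and each $U_j$ in every matrix entry, hence equals $M_2(\cl B)$. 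Since $\cl A$ satisfies SP if and only if $M_2(\cl A)$ does, and likewise for $\cl B$, we may from now on assume $F=\{P_1,\dots,P_m\}$ is a finite set of projections. (If $\cl B$ is non-unital we first pass to unitizations, using $\tilde{\cl B}=C^*(\tilde{\cl A}\cup F)$ and the fact that SP is preserved under unitization, an idempotent in a von Neumann algebra being similar, through an invertible element of that same algebra, to a projection.)

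For the main step, realize $\cl B$ in its universal representation $\pi_u:\cl B\to B(H_u)$, so that $\cl B^{**}\cong\pi_u(\cl B)''=\pi_u(\cl A)''\vee\{\pi_u(P_1)\}''\vee\cdots\vee\{\pi_u(P_m)\}''$. The restriction $\pi_u|_{\cl A}$ extends to a normal $*$-homomorphism of $\cl A^{**}$ onto $\pi_u(\cl A)''$, so $\pi_u(\cl A)''$ is a $w^*$-continuous $*$-homomorphic image of $\cl A^{**}$. Because $\cl A$ satisfies SP, $\cl A^{**}$ satisfies WSP, and hence by Theorem~\ref{maponto} so does $\pi_u(\cl A)''$. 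By Theorem~\ref{pep} this says precisely that $M_J(\pi_u(\cl A)')$ is hyperreflexive for every cardinal $J$, i.e.\ that $\pi_u(\cl A)'=(\pi_u(\cl A)'')'$ is completely hyperreflexive.

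Finally, taking commutants gives
\[(\cl B^{**})'=\pi_u(\cl A)'\wedge\{\pi_u(P_1)\}'\wedge\cdots\wedge\{\pi_u(P_m)\}'.\]
Starting from the completely hyperreflexive algebra $\pi_u(\cl A)'$ and intersecting with $\{\pi_u(P_i)\}'$ one projection at a time, repeated application of EPH.2 (exactly as in the induction of Theorem~\ref{xxx}) shows that $(\cl B^{**})'$ is completely hyperreflexive, so that $M_J((\cl B^{**})')$ is hyperreflexive for all $J$. By Theorem~\ref{pep}, $\cl B^{**}$ satisfies WSP, and therefore $\cl B$ satisfies SP.

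The conceptual heart, and the step I would watch most carefully, is the bridge in the second paragraph: one must verify that the von Neumann algebra generated by $\cl A$ inside the universal representation of the \emph{larger} algebra $\cl B$ is genuinely a $w^*$-continuous quotient of $\cl A^{**}$, so that Theorem~\ref{maponto} applies and transfers WSP. The other point requiring care is that EPH.2 is a statement about commutants of single projections; this is why the reduction of $F$ to projections in the first paragraph is not a convenience but a necessity, since for a general (even self-adjoint) generator $b$ the algebra $\{b\}''$ need not be generated by finitely many projections, and $\{b\}'$ is then not the commutant of a single projection.
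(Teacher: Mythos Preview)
Your proof is correct and follows the same architecture as the paper's: pass to the universal representation, show the weak closure of $\cl A$ there satisfies WSP, reduce the generators to projections via the $M_2$ trick, and then peel off the commutants $\{P_i\}'$ one at a time using EPH.2. The one substantive difference is how you establish that $\Omega_1:=\pi_u(\cl A)''$ satisfies WSP. You invoke Theorem~\ref{maponto}, noting that $\Omega_1$ is a normal $*$-homomorphic image of $\cl A^{**}$; the paper instead argues directly: given a $w^*$-continuous bounded homomorphism $u:\Omega_1\to B(K)$, restrict to $\pi(\cl A)\cong\cl A$ to get a CB map by SP, extend CB-ly to $\pi(\cl B)$ by Wittstock, extend $w^*$-continuously to $\cl B^{**}$, and then restrict back to $\Omega_1$, where the result coincides with $u$ by $w^*$-density. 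Your route is cleaner and stays entirely within the paper's existing lemmas; the paper's avoids the appeal to Theorem~\ref{maponto} (and hence to Corollary~\ref{hyp}) at the cost of a short extension--restriction argument. A secondary difference is that you carry out the reduction to projections at the $C^*$-level before taking biduals, while the paper finds the projections inside $M_2(W^*(F))$ after passing to the bidual; both orders work.
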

\begin{proof} Assume that $$\pi: \cl B\to \cl B^{**}\subseteq B(H)$$ 
is an injective homomorphism. We denote 
$$\Omega_2=\cl B^{**}, \;\;\Omega_1=\overline{\pi(\cl A)}^{w^*}.$$
We claim that $\Omega_1$ satisfies WSP.
Indeed, if $u: \Omega_1\to B(K)$ is a bounded $w^*-$continuous homomorphism then 
$v_0=u|_{\pi(\cl A)}\to B(K)$ is completely bounded homomorphism. Thus $v_0$  extends to a completely bounded map
$$v_0: \pi(\cl B)\to B(K). $$  Therefore $v_0$ extends to a $w^*$-continuous completely bounded map 
$$v: \Omega_2\to B(K).$$
Since $u=v|_{\Omega_1}, u$  is completely bounded. 

We have that 
$$\Omega_2'=\Omega_1'\cap W^*(F)'$$
There exist projections $P_1,...,P_n\in M_2(W^*(F))$ generating $M_2(W^*(F))$ as von Neumann algebra.We have that,  
$$(M_2(\Omega_2))'\bar \otimes B(l^2)=\cap_{i=1}^n((M_2(\Omega_1))'\bar \otimes B(l^2))\cap \{(P_i\otimes I\}'.$$

Since $M_2(\Omega_1)$ satisfies WSP then $M_2(\Omega_1))'\bar \otimes B(l^2)$ is hyperreflexive. 

Lemma \ref{xxx}, implies using induction that $M_2(\Omega_2))'\bar \otimes B(l^2)$ is hyperreflexive. Thus $M_2(\Omega_2))$ and hence $\cl B^{**}\cong \Omega_2$  satisfies WSP. This implies that $\cl B$ satisfies SP.
\end{proof}

\begin{theorem}\label{10} Let $\cl A, \cl B$ be von Neumann algebras acting on the Hilbert space $H$ and $F\subseteq \cl B$ be a finite set such that 
$$\cl B=W^*(\cl A \cup F).$$
If $\cl A$ satisfies WSP then  under the EPH.2 hypothesis  so does $\cl B.$
\end{theorem}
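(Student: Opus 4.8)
The plan is to mirror the proof of Theorem~\ref{9} almost verbatim, replacing the $C^*$-algebra/bidual machinery with the purely von Neumann algebraic analogues and using the fact that we are already working $w^*$-continuously. First I would fix a $w^*$-continuous bounded homomorphism and, rather than passing through biduals, work directly with the given von Neumann algebras $\cl A \subseteq \cl B$ on $H$, setting $\Omega_1 = \cl A$ and $\Omega_2 = \cl B = W^*(\cl A \cup F)$. The goal, via Theorem~\ref{pep}, is to show that $M_J(\Omega_2')$ is hyperreflexive for every cardinal $J$.

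The key step is the commutant decomposition. Writing $\cl B = W^*(\cl A \cup F)$ gives $\Omega_2' = \cl B' = \cl A' \wedge W^*(F)'= \Omega_1' \cap W^*(F)'$. Since $F$ is a finite set, Lemma~\ref{v} (applied after passing to $M_2$) produces finitely many projections $P_1,\dots,P_n$ that generate $M_2(W^*(F))$ as a von Neumann algebra, so that $M_2(W^*(F))' = \{P_1\}' \wedge \cdots \wedge \{P_n\}'$. Tensoring everything with $B(\ell^2(J))$, I would obtain
\[
(M_2(\Omega_2))' \,\bar\otimes\, B(\ell^2(J)) = \bigcap_{i=1}^n \Bigl( (M_2(\Omega_1))' \,\bar\otimes\, B(\ell^2(J)) \cap \{P_i \otimes I\}' \Bigr).
\]
Because $\cl A$ satisfies the WSP, so does $M_2(\cl A) = M_2(\Omega_1)$ by Lemma~\ref{6}, and hence by Theorem~\ref{pep} the algebra $(M_2(\Omega_1))' \,\bar\otimes\, B(\ell^2(J))$ is hyperreflexive. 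Now I would apply Theorem~\ref{xxx} (the EPH.2 $\Rightarrow$ intersecting a completely hyperreflexive algebra by a single projection commutant stays hyperreflexive) inductively, one projection $P_i$ at a time, to conclude that the full intersection is hyperreflexive.

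Having shown $(M_2(\Omega_2))' \,\bar\otimes\, B(\ell^2(J))$ is hyperreflexive for all $J$, Theorem~\ref{pep} gives that $M_2(\Omega_2)$ satisfies the WSP, and then Lemma~\ref{6} descends this to $\Omega_2 = \cl B$. This completes the argument.

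The main obstacle, and the only place demanding care, is the inductive application of EPH.2. One must verify at each stage of the induction that the algebra produced after intersecting with $\{P_1 \otimes I\}', \dots, \{P_{i} \otimes I\}'$ is not merely hyperreflexive but \emph{completely} hyperreflexive, since EPH.2 (as invoked through Theorem~\ref{xxx}) requires a completely hyperreflexive input to guarantee a completely hyperreflexive output; this is exactly why the tensoring with $B(\ell^2(J))$ for \emph{all} $J$ is threaded through the whole computation rather than fixing a single $J$. A secondary subtlety is ensuring that the commutant of $W^*(\cl A \cup F)$ really splits as $\cl A' \cap W^*(F)'$, which is immediate from the fact that the commutant of a join of von Neumann algebras is the intersection of their commutants. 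I would also note that the passage to $M_2$ is forced by Lemma~\ref{v}, which only guarantees \emph{projection} generators after amplifying, and EPH.2/EPH.3 are phrased in terms of commutants of projections.
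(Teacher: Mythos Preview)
Your proposal is correct and follows essentially the same route as the paper: pass to $M_2$ to obtain projection generators of $M_2(W^*(F))$ via Lemma~\ref{v}, write $(M_2(\cl B))'\bar\otimes B(\ell^2(J))$ as the iterated intersection of $(M_2(\cl A))'\bar\otimes B(\ell^2(J))$ with the commutants $\{P_i\otimes I\}'$, use WSP of $M_2(\cl A)$ to start, apply EPH.2 inductively, and descend to $\cl B$ via Lemma~\ref{6}. If anything, your discussion of the induction is more careful than the paper's, which simply cites ``Lemma~\ref{xxx}'' for the inductive step; note that what is actually invoked at each stage is EPH.2 itself (complete hyperreflexivity in, complete hyperreflexivity out), not the implication EPH.2 $\Rightarrow$ EPH.3 proved in Theorem~\ref{xxx}.
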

\begin{proof}

We have that 
$$\cl B'=\cl A'\cap W^*(F)'$$
There exist projections $P_1,...,P_n\in M_2(W^*(F))$ generating $M_2(W^*(F))$ as von Neumann algebra.|We have that, 
$$M_2(\cl B)'\bar \otimes B(l^2)=\cap_{i=1}^n((M_2(\cl A)'\bar \otimes B(l^2))\cap \{P_i\otimes I\})'.$$
Since $M_2(\cl A)$ satisfies WSP then $M_2(\cl A))'\bar \otimes B(l^2)$ is hyperreflexive.
Lemma  \ref{xxx}, implies using induction on $n$ that $M_2(\cl B))'\bar \otimes B(l^2)$ is hyperreflexive. Thus $M_2(\cl B))$ and hence $\cl B$  satisfies WSP.
\end{proof}

\begin{theorem}\label{lmerd} Let $\cl A$ be a von Neumann algebra satisfying WSP and $\cl B=l^\infty(\bb N,\cl A)$ the algebra of bounded functions from $\bb N$ to $\cl A.$ Then $\cl B$ satisfies WSP.
\end{theorem}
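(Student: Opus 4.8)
The plan is to pass to the commutant, apply Theorem~\ref{pep}, and isolate the one genuinely hard input. Represent $\cl B=\ell^\infty(\bb N,\cl A)=\cl A\bar\otimes\ell^\infty(\bb N)$ on $\bigoplus_nH=\ell^2(\bb N)\otimes H$ as the block-diagonal operators $\bigoplus_nA_n$ with $A_n\in\cl A$ and $\sup_n\|A_n\|<\infty$. Writing a commutant element as an operator matrix $(T_{mn})$ and commuting it with the projections $E_n$ onto the summands (which lie in $\cl B$) forces $T$ to be block-diagonal, and then each diagonal block lies in $\cl A'$; hence $\cl B'=\ell^\infty(\bb N,\cl A')$ and, for every cardinal $J$,
$$M_J(\cl B')=\cl B'\bar\otimes B(\ell^2(J))=\ell^\infty(\bb N,M_J(\cl A')).$$
Since $\cl A$ satisfies the WSP, each $\cl Z:=M_J(\cl A')$ is hyperreflexive and, by Remark~\ref{ep}, $k(\cl Z)\le M$ uniformly in $J$. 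By Theorem~\ref{pep} it therefore suffices to bound $k(\ell^\infty(\bb N,\cl Z))$ for a hyperreflexive $\cl Z$.

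The core step I would carry out is the estimate
$$k\big(\ell^\infty(\bb N,\cl Z)\big)\le C_1+(1+C_1)\,k(\cl Z),$$
where $C_1$ is the hyperreflexivity constant of the block-diagonal algebra $\cl W:=\{E_n\}'$. Set $\cl Y=\ell^\infty(\bb N,\cl Z)\subseteq\cl W$ and fix $T$. Because $\cl Y\subseteq\cl W$ we have $r_{\cl W}(T)\le r_{\cl Y}(T)$, so hyperreflexivity of $\cl W$ produces a block-diagonal $W_0=\bigoplus_n(W_0)_n$ with $\|T-W_0\|\le C_1\,r_{\cl Y}(T)+\epsilon$. For a block-diagonal element one checks directly that $d(W_0,\cl Y)=\sup_n d((W_0)_n,\cl Z)$, while embedding an annihilating pair for $\cl Z$ into the $n$-th summand gives $r_{\cl Z}((W_0)_n)\le r_{\cl Y}(W_0)$. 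Since $r_{\cl Y}$ is a seminorm dominated by the operator norm, $r_{\cl Y}(W_0)\le r_{\cl Y}(T)+\|T-W_0\|$, and combining this with $d(T,\cl Y)\le\|T-W_0\|+d(W_0,\cl Y)$ and letting $\epsilon\to0$ yields the displayed bound.

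The main obstacle is the remaining input: hyperreflexivity of $\cl W=\{E_n\}'=\ell^\infty(\bb N,B(K))$ with a constant $C_1$ independent of the block space $K$. This cannot be bypassed, since taking $\cl A=\bb C$ shows that this special case is already equivalent to the theorem, and the naive approximant $\bigoplus_n E_nTE_n$ fails because for infinitely many blocks $\|T-\bigoplus_n E_nTE_n\|$ is not controlled by $r_{\cl W}(T)$. I would instead observe that $\{E_n\}''$ is the abelian, hence injective, von Neumann algebra generated by the $E_n$; as an abstract von Neumann algebra it is $\ell^\infty(\bb N)=c_0(\bb N)^{**}$, and since the commutative (hence nuclear) $C^*$-algebra $c_0(\bb N)$ satisfies the SP, $\{E_n\}''$ satisfies the WSP. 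Applying Theorem~\ref{pep} to $\{E_n\}''$ and invoking Remark~\ref{ep} shows that every amplification $M_{J'}(\{E_n\}')$ is hyperreflexive with a single uniform constant $C_1$; as $\{E_n\}'$ acting on $\bigoplus_n K$ is precisely such an amplification, this supplies the required $C_1$, uniformly in $K$.

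Assembling the pieces, for $\cl Z=M_J(\cl A')$ we obtain $k(M_J(\cl B'))\le C_1+(1+C_1)M$ for all cardinals $J$, so every $M_J(\cl B')$ is hyperreflexive and Theorem~\ref{pep} gives that $\cl B$ satisfies the WSP. The delicate point throughout is the $K$-uniform hyperreflexivity of $\{E_n\}'$; everything else is the bookkeeping in the distance estimate above.
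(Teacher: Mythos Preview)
Your argument is correct, and it follows a genuinely different path from the paper's own proof. The paper works directly with a weak*-continuous bounded homomorphism $u:\cl B\to B(H)$: it restricts to the coordinate maps $u_i:\cl A\to B(H)$, $X\mapsto u(e_i\otimes X)$, uses the WSP of $\cl A$ to write $u_i=S_i^{-1}\pi_i S_i$ with $\pi_i$ a $*$-homomorphism, and then invokes Le~Merdy's polynomial bound \cite[Theorem 4.1]{lm} to obtain $\|u_i\|_{cb}\le K\|u\|^d$ uniformly in $i$; from $\sup_i\|S_i^{-1}\|\|S_i\|<\infty$ a direct estimate shows $u$ is completely bounded. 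Your route instead stays entirely on the hyperreflexivity side of Theorem~\ref{pep}: you identify $M_J(\cl B')$ with $\ell^\infty(\bb N,M_J(\cl A'))$, reduce to the distance estimate $k(\ell^\infty(\bb N,\cl Z))\le C_1+(1+C_1)k(\cl Z)$, and supply the key constant $C_1$ by recognising $\{E_n\}'$ as an amplification of $\ell^\infty(\bb N)'$ and applying Theorem~\ref{pep} and Remark~\ref{ep} to the abelian (hence WSP) algebra $\{E_n\}''$. The paper's proof is shorter and yields, via \cite{lm}, an explicit polynomial control of $\|u\|_{cb}$ in terms of $\|u\|$; your proof avoids the external input \cite{lm}, produces an explicit hyperreflexivity constant for $M_J(\cl B')$, and fits more natively into the hyperreflexivity framework developed in the paper. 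Note that strictly speaking you only need $C_1<\infty$ for each $K$ to conclude via Theorem~\ref{pep}; the uniformity in $K$ that you establish is a pleasant bonus rather than a requirement.
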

\begin{proof} Let $u: \cl B\to B(H)$ be a weak* continuous bounded homomorphism. It suffices to prove that $u$ is completely bounded. 
Define, $$u_i: \cl A\to B(H), X\to u(e_i\otimes X).$$
Clearly $u_i$ is weak* continuous bounded homomorphism. If $P_i=u(e_i\otimes I_{\cl A})$ then 
$$u_i(X)=P_iu_i(X)P_i, P_i^2=P_i.$$
Since $\cl A$ satisfies WSP, there exist normal $*-$homomorphisms $\pi_i: \cl A\to B(H)$ and invertible operators $S_i$ such that $$\|S_i^{-1}\|\|S_i\|=\|u_i\|_{cb},\;\;u_i(X)=S_i^{-1}\pi_i(X)S_i,\;;\forall i\in \bb N.$$
 By \cite[Theorem 4.1]{lm} there exist $K>0, d>0$ such that 
$$\|u_i\|_{cb}\le K\|u_i\|^d\le K\|u\|^d, \forall i.$$
Thus $$\sup_i\|S_i^{-1}\|\|S_i\|<\infty.$$
Observe that if $Q_i=S_iP_iS_i^{-1}$ then 
$$Q_i^2=Q_i=Q_i^*,\;\; Q_iQ_j=0,i\neq j,\;\; \pi_i(X)=Q_i\pi_i(X)Q_i.$$
Define $\pi=\oplus_i\pi_i.$ This is a $*-$homomorphism. 

If $f\in \cl B$ then 
$$f(i)=X_i,\;\;\sup_i\|X_i\|<\infty\Rightarrow f=w^*-\sum_ie_i\otimes X_i.$$
We have that 
$$u(f)=w^*-\sum_iu(e_i\otimes X_i)=w^*-\sum _iu_i(X_i)=w^*-\sum_i S_i^{-1}\pi_i(X_i)S_i.$$
Fix $(f_{k,l})_{k,l}\in \Ball(M_n(\cl B)),$ where $f_{k,l}(i)=X_i^{k,l}$  

We have that
$$\|(u(f_{k,l})_{k,l}\|_{M_n(B(H))}=\|(\sum_i S_i^{-1}\pi_i(X_i^{k,l})S_i)_{k,l}\|_{M_n(B(H))}\le$$$$ \sup_i\|S_i^{-1}\|\|S_i\|
\|(\sum_i\oplus \pi_i(X_i^{k,l}))\|_{M_n(B(H))}=$$$$\sup_i\|S_i^{-1}\|\|S_i\|\|((\pi(f_{k,l})_{k,l})\|_{M_n(B(H))}\le \sup_i\|S_i^{-1}\|\|S_i\|<\infty.$$ 
Thus $u$ is completely bounded.

\end{proof}

The following can be deduced as a corollary of the previous theorem, but has a somewhat simpler direct proof. 

\begin{theorem}\label{pis} Let $\cl A$ be a weakly finitely generated von Neumann algebra and $\cl B=l^\infty(\bb N,\cl A)$ the algebra of bounded functions from $\bb N$ to $\cl A.$ Then $\cl B$ is also  weakly finitely generated and thus  under EPH.3   hypothesis  satisfies WSP.
\end{theorem}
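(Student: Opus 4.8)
The plan is to establish the two assertions in turn. The clause ``and thus under EPH.3 satisfies WSP'' is immediate from Theorem~\ref{8} once $\cl B$ is shown to be weakly generated by a finite set, so the entire content lies in exhibiting a finite weakly generating set for $\cl B = \ell^\infty(\bb N, \cl A)$.

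First I would fix generators $A_1, \dots, A_m$ of $\cl A$, with $\cl A \subseteq B(\cl H)$, so that $\cl B$ acts on $\oplus_n \cl H$ as the block-diagonal operators with uniformly bounded blocks, and I would write $e_j \otimes X$ for the element of $\cl B$ equal to $X \in \cl A$ in coordinate $j$ and $0$ elsewhere. As candidate generators I would take the $m$ constant sequences $G_k = (A_k, A_k, \dots)$, $1 \le k \le m$, together with a single diagonal element $D = (d_n I_{\cl A})_n$, where $(d_n)_n$ is a bounded sequence of pairwise distinct scalars (e.g.\ $d_n = 1/n$). The claim to be proved is that $\cl B = W^*(G_1, \dots, G_m, D)$, so that $\cl B$ is weakly generated by the $(m+1)$-element set $\{G_1, \dots, G_m, D\}$. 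This echoes the technique used in Remark~\ref{MM}, where a diagonal element separates coordinates.

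The role of $D$ is to manufacture the coordinate projections. Since the $d_n$ are distinct, $\ker(D - d_j I)$ is exactly the $j$-th copy of $\cl H$, and so the spectral projection $\chi_{\{d_j\}}(D)$ equals the coordinate projection $E_j := e_j \otimes I_{\cl A}$; because $D$ is normal, each such spectral projection belongs to $W^*(D)$. Compressing the constant sequences then gives $E_j G_k E_j = e_j \otimes A_k \in W^*(G_1, \dots, G_m, D)$. Since $A_1, \dots, A_m$ generate $\cl A$ and the corner $E_j \cl B E_j$ is isomorphic to $\cl A$, the projection $E_j$ together with the elements $e_j \otimes A_k$ weakly generate the full $j$-th copy $e_j \otimes \cl A$ of $\cl A$ inside $\cl B$. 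Finally, any $(X_n)_n \in \cl B$ is the $w^*$-limit of its partial sums $\sum_{n \le N} (e_n \otimes X_n)$, each term of which lies in some $e_n \otimes \cl A \subseteq W^*(G_1, \dots, G_m, D)$; as the latter algebra is $w^*$-closed it contains $(X_n)_n$. This yields $\cl B \subseteq W^*(G_1, \dots, G_m, D)$, the reverse inclusion being trivial.

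The hard part is not conceptual but a matter of two verifications: that $\chi_{\{d_j\}}(D) = E_j$ really lies in $W^*(D)$ --- here one uses that $D$ is normal and the $d_j$ are distinct, noting that although $\lim_n d_n$ sits in the continuous spectrum of $D$ the individual $d_j$ are genuine isolated eigenvalues with spectral projection $E_j$ --- and that the partial-sum approximation converges in the $w^*$-topology of $\cl B$, which follows from identifying the predual of $\cl B$ with $\ell^1(\bb N, \cl A_*)$. With $\cl B$ exhibited as weakly generated by the finite set $\{G_1, \dots, G_m, D\}$, Theorem~\ref{8} applies verbatim to conclude that $\cl B$ satisfies the WSP under the EPH.3 hypothesis.
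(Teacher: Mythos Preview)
Your proof is correct and uses the very same generating set as the paper: the constant sequences $G_k$ are exactly the elements $I\otimes X_k$, and your diagonal $D=(d_nI_{\cl A})_n$ is $S\otimes I$ for a single self-adjoint generator $S$ of the masa $\ell^\infty(\bb N)$. The only difference lies in how you verify that these elements generate $\cl B$. The paper simply observes that $W^*(S\otimes I)=W^*(S)\bar\otimes\bb CI=\ell^\infty(\bb N)\bar\otimes\bb CI$ and $W^*(I\otimes X_1,\dots,I\otimes X_k)=\bb CI\bar\otimes\cl A$, so every elementary tensor $X\otimes Y=(X\otimes I)(I\otimes Y)$ already lies in the generated algebra, and the elementary tensors are $w^*$-dense. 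Your route---extracting the coordinate projections $E_j=\chi_{\{d_j\}}(D)$ via Borel functional calculus, compressing the $G_k$ to obtain $e_j\otimes A_k$, and then approximating an arbitrary element by its partial sums---arrives at the same conclusion by a more hands-on coordinatewise argument. The paper's verification is a line shorter because it never needs to isolate individual coordinates, but your version has the virtue of making the mechanism transparent and of reusing the idea already flagged in Remark~\ref{MM}.
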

\begin{proof}
The algebra $\cl B$ is isomorphic with the von Neumann algebra $l^\infty(N)\bar \otimes A.$ The masa $l^\infty(N)=W^*(S)$ is generated by a   selfadjoint element $S.$ Assume that $$\cl A=W^*(X_1,...,X_k).$$

We are going to prove that  $$(l^\infty(N)\bar \otimes A)=W^*(S\otimes I, I\otimes X_1,...,I\otimes X_k).$$

Let $ X\in \l^\infty(N), Y\in A,$ it suffices to prove that $$ X\otimes Y\in W^*(S\otimes I, I\otimes X_1,...,I\otimes X_k).$$

We have that,

$$X\otimes Y =(X\otimes I)(I\otimes Y)\in (W^*(S)\otimes I)( I\otimes W^*(X_1,...,X_k))=$$
$$(W^*(S\otimes I))( W^*(I\otimes X_1,...,I\otimes X_k))\subseteq $$
$$W^*(S\otimes I, I\otimes X_1,...,I\otimes X_k).$$

\end{proof}



\begin{thebibliography}{99}



\bibitem{bm}
{\sc D. P. Blecher and C. Le Merdy}, {\it Operator algebras and their modules}, London Mathematical Society Monographs, Oxford University Press, 2004.

\bibitem{DAVIS}
{\sc C. Davis}, {\it Generators of the ring of bounded operators}, {\rm Proceedings of the AMS}, 6, 970-972,  1955.


\bibitem{chris}
{\sc E. Christensen,} {\it Extensions of derivations II}, {\rm Mathematica Scandinavica}  50 (1)  111-122, 1982.


\bibitem{dl} {\sc K.R. Davidson and R. Levene}, {\it 1-Hyperreflexivity and complete hyperreflexivity}, {\rm Journal of Functional Analysis} 235 (2006), no 2, 666-701.

\bibitem{ele}
{\sc G. K. Eleftherakis,} {\it Stable properties of hyperreflexivity}, {\rm Glasgow Math. Journal }, 58 (1), 205-218, 2016.



\bibitem{ep}
{\sc G. K. Eleftherakis and E. Papapetros,} {\it The similarity problem and hyperreflexivity of von Neumann algebras}, {\rm Journal of Operator Theory},  94 (2025, no 1, 195-217).

\bibitem{GP} {\sc L. Ge and S. Popa,} {\it On some decomposition properties for factors of type $II_1$,} Duke Math. J. {\bf 94}(1998), 79--101.

\bibitem{haag}{\sc  U. Haagerup,} {\it Solution of the similarity problem for cyclic representations of $C^*$-algebras},
 Annals of Math.  118 (1983) 215-240.


\bibitem{H}
{\sc D. Hadwin,} {\it A general view of reflexivity}, {rm Trans. Amer. Math. Soc.} 344 (1994), no.
1, 325–360.

\bibitem{kad} {\sc R. Kadison}, {\it On the orthogonalization of operator representations}, Amer. J. Math 
77 (1955) 600-622.

\bibitem{ki} {\sc E. Kirchberg,} {\it The derivation and the similarity problem are equivalent}, J. Oper. Theory 
36 (1996) 59-62.


\bibitem{lm} {\sc C. Le Merdy}, {\it The weak* similarity property on dual operator algebras}, Integral Equations and Operator Theory, 
37 (2000) 72-94.



 \bibitem{pau}
{\sc  V.I Paulsen}, {\it Completely bounded maps and operator algebras},
 Cambridge University Press, 2002.
 
   \bibitem{pap1}
{\sc E. Papapetros}, {\it A new approach to the similarity problem},  Adv. Oper. Theory , 9 (2024) no 3. 

\bibitem{pap2}
{\sc E. Papapetros}, {\it Nuclear $C^*$-algebras and similarity problem}, submitted.

 
\bibitem{pisier}  {\sc G. Pisier}, G.  {\it Similarity problems and completely bounded maps}, Lecture 
notes in Mathematics, V0l. 1618, Springer Verlag, Berlin, 2001.
 
 \bibitem{pi}
  {\sc G. Pisier}, 
  {\it Introduction to operator space theory}, 
 Cambridge University Press, 2003.

\bibitem{SS}
{\sc A. Sinclair and R. R. Smith,} {\it Finite von Neumann algebras and Masas}. London, Mathematical Soceity, Lecture Note Series 351, 2008.

\bibitem{Wo} {\sc W. Wogen}, {\it On generators for von Neumann algebras},  Bull. Amer. Math. Soc. {\bf 75}(1969), 95--99.





\end{thebibliography}
\end{document}